\newtheorem{theorem}{Theorem}[section]
\newtheorem{proposition}[theorem]{Proposition}
\newtheorem{corollary}[theorem]{Corollary}
\newtheorem{lemma}[theorem]{Lemma}
\newtheorem{remark}[theorem]{Remark}
\numberwithin{equation}{section}
\newcommand{\Z}{{\mathbb Z}}
\newcommand{\hcal}{{\mathcal H}}
\numberwithin{equation}{section}
\newtheorem{theo}{{\sc Theorem}}
\newtheorem{lem}[theo]{{\sc Lemma}}
\newtheorem{prop}[theo]{{\sc Proposition}}
\newcommand{\R}{{\mathbb R}}
\newtheorem*{main-theorem}{Main Theorem}
\newtheorem*{old-thm}{Theorem}
\theoremstyle{definition}
\numberwithin{equation}{section}
\def\11{\mathds{1}}
\def\phi{\varphi}
\def\half{{\frac{1}{2}}}
\def\be{\begin{eqnarray*}}
\def\ee{\end{eqnarray*}}
\def\ben{\begin{eqnarray}}
\def\een{\end{eqnarray}}
\def\lll{\left\langle}
\def\rrr{\right\rangle}
\def\L2R{L_{\text{Rest}}^2}
\newcommand{\dcal}{\mathcal{D}}
\begin{document}
\title[Number of nodal  domains and singular points]{Number of nodal  domains and singular points of eigenfunctions of negatively curved surfaces with an isometric involution}
\author{Junehyuk Jung and Steve Zelditch}

\address{Department of Mathematical Science, KAIST, Daejeon 305-701, South Korea}

\email{junehyuk@math.kaist.ac.kr}

\address{Department of Mathematics, Northwestern  University, Evanston, IL 60208, USA}

\email{zelditch@math.northwestern.edu}

\thanks{Research partially supported by NSF grant DMS-1206527. The first author was supported by the National Research Foundation of Korea(NRF) grant funded by the Korea government(MSIP)(No. 2013042157)}
\begin{abstract}

We prove two types of  nodal results for density one subsequences of an orthonormal basis $\{\phi_j\}$  of eigenfunctions of
the Laplacian on a negatively curved compact surface. The first type of result involves the intersections $Z_{\phi_j} \cap H$ of the nodal
set $Z_{\phi_j}$ of $\phi_j$ with a smooth curve $H$. Using recent results on quantum ergodic restriction theorems and
prior results on periods of eigenfunctions over curves, we prove that the number of intersection points tends to infinity
for a density one subsequence of the $\phi_j$, and furthermore that the number of such points where  $\phi_j |_H$ changes sign
tends to infinity.   We also prove that the number of zeros of the normal derivative $\partial_{\nu} \phi_j$
on $H$ tends to infinity, also with sign changes. From these results we obtain a lower bound on the number of nodal domains
of even and odd eigenfunctions on surfaces with an orientation-reversing  isometric involution with non-empty and separating
fixed point set. Using (and generalizing) a geometric argument
of Ghosh-Reznikov-Sarnak, we show that the number of nodal domains of even or odd eigenfunctions tends to infinity
for a density one subsequence of eigenfunctions.

\end{abstract}
\maketitle
\section{Introduction}

Let $(M, g)$ be a compact two-dimensional $C^{\infty}$ Riemannian surface of genus $g \geq 2$,  let $\phi_{\lambda}$ be an $L^2$-normalized eigenfunction of the Laplacian,
\[
\Delta \phi_{\lambda} = - \lambda \phi_{\lambda},
\]
let
\[
Z_{\phi_{\lambda}} = \{x: \phi_{\lambda}(x) = 0\}
\]
be its nodal line. This note is concerned with lower bounds on the number of intersections of $Z_{\phi_{\lambda}}$ with a closed curve $\gamma \subset M$ in the case of negatively curved surfaces. More precisely, we show that for closed curves satisfying a generic asymmetry assumption, the number of intersections tends to infinity for a density one subsequence of the eigenfunctions. We also prove the same result for even eigenfunctions when $(M, g)$ admits an
orientation-reversing  isometric involution $\sigma$ whose  fixed point set $\mbox{Fix}(\sigma)$ is separating,
and when  $\gamma$ is a component of $\mbox{Fix}(\sigma)$.  When combined with some geometric arguments adapted from \cite{gzs} the result implies that the number of nodal domains of even (resp. odd) eigenfunctions tends to infinity for a density one subsequence of the eigenfunctions. At the same time, we show that odd eigenfunctions in the same setting have a growing number of singular points\footnote{Singular points are points $x$ where $\phi(x) = d\phi(x) = 0$}. Aside from the  arithemetic case in \cite{gzs} or some explicitly solvable models such as surfaces of revolution, where one can separate variables to find nodal and singular points, these results appear to be the first to give a class of surfaces where the number of nodal domains and critical points are known to tend to infinity for any infinite sequence of eigenfunctions.

We denote the intersections of the nodal set of $\phi_j$ with a closed  curve $H$ by $Z_{\phi_j} \cap H$. We do
not need to assume that $H$ is connected, but it is a finite union of components. We would like to count the number of intersection points. This presumes that the number is finite, but since our purpose is to obtain lower bounds on numbers of intersection points it represents no loss of generality. We define the number to be infinite if the number of intersection points fails to be finite, e.g. if the curve is an arc of the nodal set.

Our first theorem requires the assumption that the closed curve is asymmetric with respect to the geodesic flow.
The precise definition is that $H$ has zero measure of microlocal reflection symmetry in the sense of Definition
1 of \cite{tz1}.  Essentially this means that the two geodesics with mirror image initial velocities emanating from a point of $H$ almost never return to $H$ at the same time to the same place. For more details we refer to \S \ref{QERasym}.

\begin{theorem}\label{theoN}
Let $(M, g) $ be a   $C^{\infty} $ compact negatively curved surface, and let $H$ be a closed curve which is asymmetric with respect to the geodesic flow. Then for any orthonormal eigenbasis $\{\phi_j\}$ of $\Delta$-eigenfunctions of $(M, g)$, there exists  a density $1$ subset $A$ of $\mathbb{N}$ such that
\[
\left\{\begin{array}{l}
\lim_{\substack{j \to \infty \\ j \in A}} \# \; Z_{\phi_j} \cap H = \infty
\\ \\
\lim_{\substack{j \to \infty \\ j \in A}} \# \;  \{x \in H: \partial_{\nu} \phi_j(x) = 0\} = \infty.
\end{array} \right.
\]
Furthermore, there are an infinite number of zeros where  $\phi_j |_H$ (resp. $\partial_{\nu} \phi_j |_H$) changes sign.
\end{theorem}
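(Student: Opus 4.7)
The plan is to combine a quantum ergodic restriction (QER) theorem with a sub-arc period bound, and then count sign changes by a pigeonhole argument. First I would invoke the QER theorem of \cite{tz1} for asymmetric $H$, together with its Neumann analogue: along a density one subsequence $A\subset\mathbb{N}$, the restrictions $\phi_j|_H$ and $\lambda_j^{-1/2}\partial_\nu\phi_j|_H$ equidistribute on $H$, yielding the $L^2$ lower bounds
\[
\|\phi_j|_H\|_{L^2(H)}^2 \gtrsim 1, \qquad \lambda_j^{-1}\|\partial_\nu \phi_j|_H\|_{L^2(H)}^2 \gtrsim 1.
\]
The asymmetry hypothesis is precisely what prevents the microlocal lift from concentrating on the invariant ``symmetric'' set and so forces positive limit mass on $H$. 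Interpolating with H\"ormander's bound $\|\phi_j|_H\|_\infty\lesssim \lambda_j^{1/4}$ via $\|\phi_j|_H\|_{L^2}^2 \leq \|\phi_j|_H\|_\infty\|\phi_j|_H\|_{L^1}$ then gives $\|\phi_j|_H\|_{L^1}\gtrsim \lambda_j^{-1/4}$, with the analogous rescaling for the Neumann trace.

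Second, I would invoke a uniform sub-arc period bound on negatively curved surfaces, coming from the Kuznecov sum formula sharpened via a long-time wave parametrix of B\'erard type. Along a further density one subsequence (which remains density one after intersection with $A$),
\[
\sup_{I\subset H}\left|\int_I \phi_j\,ds\right| = o(\lambda_j^{-1/4}),
\]
and similarly the Neumann analogue rescaled by $\lambda_j^{1/2}$. Granting this, let $N_j$ denote the number of sign changes of $\phi_j|_H$. Then $H$ splits into $N_j$ sub-arcs $I_k^{(j)}$ on each of which $\phi_j$ has constant sign, so
\[
\lambda_j^{-1/4}\lesssim \|\phi_j|_H\|_{L^1(H)} = \sum_{k=1}^{N_j}\left|\int_{I_k^{(j)}}\phi_j\,ds\right| \leq N_j\cdot o(\lambda_j^{-1/4}),
\]
forcing $N_j\to\infty$ along $A$. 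Every sign change is also a zero, so $\#(Z_{\phi_j}\cap H)\to\infty$ as well; the identical argument with $\partial_\nu$-rescaling, using the Neumann QER lower bound and the corresponding period bound, handles the normal derivative claim.

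I expect the main obstacle to be the quantitative calibration in the second step: a deterministic B\'erard bound of order $(\log\lambda_j)^{-1/2}$ on a single period is far from enough to beat the $\lambda_j^{-1/4}$ deficit from $L^\infty$, so one is forced to pass to a density one subsequence where a Chebyshev argument against the Kuznecov $L^2$-mean bound produces power decay of order $\lambda_j^{-1/4+\epsilon}$, and then upgrade this to uniformity over all sub-arcs $I\subset H$ via a discretization argument in the endpoints of $I$ (with the discretization error controlled by $\|\phi_j|_H\|_\infty$). The QER step is comparatively standard once the asymmetry hypothesis is invoked, and the concluding pigeonhole count is elementary.
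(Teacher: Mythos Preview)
Your overall architecture is right---QER for the $L^2$ lower bound, Kuznecov periods for the upper bound on oscillatory integrals, and a sign-change count to finish---and matches the paper's. But the quantitative calibration is off in a way that actually breaks the argument as written.

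The issue is the direction of the Kuznecov--Chebyshev bound. From $\sum_{\lambda_j<\lambda}\bigl|\int_\gamma f\phi_j\bigr|^2 \asymp \sqrt{\lambda}$ and Weyl's law, the mean of $\bigl|\int f\phi_j\bigr|^2$ over a dyadic window is $\asymp\lambda^{-1/2}$, so Chebyshev only yields $\bigl|\int f\phi_j\bigr|\le \lambda_j^{-1/4}\omega(\lambda_j)$ on a density-one set for some $\omega\to\infty$ (the paper takes $\omega=(\log\lambda_j)^{1/2}$). This is \emph{larger} than $\lambda_j^{-1/4}$, not $o(\lambda_j^{-1/4})$; your claimed bound $\sup_I|\int_I\phi_j|=o(\lambda_j^{-1/4})$ is in the wrong direction, and the later retreat to $\lambda_j^{-1/4+\epsilon}$ confirms it. With only H\"ormander's $\|\phi_j\|_\infty\lesssim\lambda_j^{1/4}$ on the other side, your $L^1$ lower bound $\gtrsim\lambda_j^{-1/4}$ does not beat the period upper bound, and the pigeonhole gives nothing.

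The paper closes the gap on the \emph{opposite} side: it uses B\'erard's log-improved sup-norm estimate $\|\phi_j\|_{L^\infty(M)}\ll \lambda_j^{1/4}/\log\lambda_j$ (valid without conjugate points), which upgrades the $L^1$ lower bound on any fixed subarc $\beta$ to $\int_\beta|\phi_j|\gg \lambda_j^{-1/4}\log\lambda_j$. This now dominates the Kuznecov--Chebyshev period bound $O(\lambda_j^{-1/4}(\log\lambda_j)^{1/2})$, forcing a sign change on each piece. The paper also sidesteps your uniformity-over-all-subarcs discretization entirely: it fixes $R$, partitions $H$ into $R$ arcs with smooth nonnegative cutoffs $f_1,\dots,f_R$, applies the Chebyshev argument to these finitely many $f_i$ (so the exceptional sets stay density zero), obtains $\ge R$ sign changes for almost all $j$, and then diagonalizes over $R$ via a standard density-one extraction lemma. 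The Neumann case is identical with the QER theorem for normalized Neumann data replacing the Dirichlet version.
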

In fact, we prove that the number of zeros tends to infinity by proving that the number of sign changes tends to infinity.

Although we state the results for negatively curved surfaces, it is sufficient that $(M, g)$ be of non-positive curvature and have ergodic geodesic flow. Non-positivity of the curvature is used to ensure that $(M, g)$ has no conjugate points and that the estimates on sup-norms of eigenfunctions in \cite{Be} apply. Ergodicity is assumed so that the Quantum Ergodic Restriction (QER) results of \cite{ctz} apply. In fact, this theorem generalizes to all dimensions and all hypersurfaces but since our main results pertain to surfaces we only state the results in this case.

We recall that in  \cite{br}, J. Br\"uning (and Yau, unpublished)  showed that $\hcal^1(Z_{\phi_{\lambda}}) \geq C_g \sqrt{\lambda}$, i.e. the length is bounded below by $C_g \sqrt{\lambda}$ for some constant $C_g > 0$. Our methods do not seem to give quantitative lower bounds on the number of nodal intersections. It is known that the number of nodal intersections in the real analytic case is bounded above by $\sqrt{\lambda}$. Some sharp results on flat tori are given by
Bourgain-Rudnick in \cite{BR}.

In contrast, the singular set is a finite set of points, and in \cite{d}, R. T. Dong gave an upper bound for $\# \Sigma_{\phi_{\lambda}}$. No lower bound is possible because $\Sigma_{\phi_{\lambda}} = \emptyset$ for all eigenfunctions of a generic smooth metric \cite{U}. In \cite{Y}, S. T. Yau posed the problem of showing that the number of critical
points of a sequence of eigenfunctions increases with the eigenvalue.  A counter-example was found by Jakobson-Nadirashvili,
who constructed a metric and a sequence of eigenfunctions with a uniformly bounded number of critical points.
One may still ask if there is {\it some} sequence of eigenfunctions for which the number of critical points tends
to infinity.

\subsection{Nodal intersections and singular points for negatively curved surfaces with an isometric involution}

We now assume that $(M, g)$ has an orientation-reversing  isometric involution
\[
\sigma: M \to M, \;\; \sigma^* g= g, \;\; \sigma^2 = Id, \;\;\; \mbox{Fix}(\sigma) \not= \emptyset,
\]
with separating  fixed point set $\mbox{Fix}(\sigma)$.
We refer to \S \ref{sigma} for background on isometric involutions and to  \cite{ss,cp,cp?} for
 more detailed discussions. By a theorem of Harnack, if $\sigma$ is an orientation-reversing involution with non-empty
fixed point set, then $\mbox{Fix}(\sigma)$ is a finite union of simple closed geodesics. We assume that the union is
a separating set (see \S \ref{sigma}).   Although this result
is usually stated for  hyperbolic metrics, it holds for all  negatively curved metrics. We denote by  $L_{even}^2(M)$  the set of $f \in L^2(M)$ such that $\sigma f=f$ and by $L_{odd}^2(Y)$ the $f$ such that $\sigma f = - f$. We denote by $\{\phi_j\}$ an orthonormal eigenbasis of Laplace eigenfunctions of $L_{even}^2(M)$, resp. $\{\psi_j\}$ for  $L^2_{odd}(M)$.

We further denote by
\[
\Sigma_{\phi_{\lambda}} = \{x \in Z_{\phi_{\lambda}}: d \phi_{\lambda}(x) = 0\}
\]
the  singular set of $\phi_{\lambda}.$ These are special critical points $d \phi_j(x) = 0$ which lie on the nodal set $Z_{\phi_j}$. For generic metrics, the singular set is empty \cite{U}. However for negatively curved surfaces with an isometric involution, odd eigenfunctions $\psi$  always have singular points. Indeed, odd eigenfunctions vanish on $\gamma$
and they have singular points  at $x \in \gamma$ where the normal derivative vanishes, $\partial_{\nu} \psi_j = 0$.

%We denote the singular points of $\phi_j$ on $\gamma$  by $\Sigma_j \cap \gamma$.

\begin{theorem}\label{theoS}
Let $(M, g) $ be a compact negatively curved  $C^{\infty} $ surface with an orientation-reversing isometric involution $\sigma : M \to M$ with $\mbox{Fix}(\sigma)$ separating. Let $\gamma \subset \mbox{Fix}(\sigma)$.    Then for any orthonormal eigenbasis $\{\phi_j\}$ of $L_{even}^2(M)$, resp. $\{\psi_j\}$ of $L^2_{odd}(M)$,  one can find a density $1$ subset $A$ of $\mathbb{N}$ such that
\[\left\{ \begin{array}{l}
\lim_{\substack{j \to \infty \\ j \in A}} \# \; Z_{\phi_j} \cap \gamma = \infty\\ \\
\lim_{\substack{j \to \infty \\ j \in A}} \# \; \Sigma_{\psi_j} \cap \gamma = \infty.
\end{array} \right.\]
Furthermore, there are an infinite number of zeros where  $\phi_j |_H$ (resp. $\partial_{\nu} \psi_j |_H$) changes sign.

\end{theorem}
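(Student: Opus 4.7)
The plan is to follow the architecture of Theorem~\ref{theoN} (an $L^2$ lower bound on the restriction obtained from QER, combined with period vanishing, forces sign changes), but $\gamma \subset \mbox{Fix}(\sigma)$ is pointwise fixed by $\sigma$ and thus maximally \emph{symmetric} with respect to the geodesic flow, so the asymmetry hypothesis of Theorem~\ref{theoN} is unavailable. The substitute is to pass to the quotient surface and convert the symmetry itself into an advantage via the parity of the eigenfunctions.

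\textbf{Reduction and Cauchy-data QER.} Let $Y = M/\sigma$. Since $\sigma$ is an orientation-reversing involution with separating fixed set, $Y$ is a compact surface with boundary, $\pi\colon M \to Y$ is a two-fold branched cover, and $\gamma \subset \partial Y$; pushforward identifies even eigenfunctions on $M$ with Neumann eigenfunctions of the quotient Laplacian and odd ones with Dirichlet eigenfunctions, and the billiard flow on $Y$ inherits ergodicity from the geodesic flow on $M$. For odd $\psi_j$, $\psi_j|_\gamma \equiv 0$, so the tangential derivative along $\gamma$ is automatically zero and $\Sigma_{\psi_j}\cap \gamma$ is precisely the zero set of $\dnu\psi_j|_\gamma$. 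I would then invoke QER for the full Cauchy data $(u|_{\partial Y}, \lambda^{-1/2}\dnu u|_{\partial Y})$ of Laplace eigenfunctions on a surface with ergodic billiards; this statement is symmetric in nature and needs no asymmetry hypothesis. For Neumann $\phi_j$ the normal component of the Cauchy data vanishes on $\gamma$, and for Dirichlet $\psi_j$ the function component vanishes, so all the QER mass survives in the surviving component, yielding along a density-one $A \subset \NN$
\[
\liminf_{\substack{j\to\infty\\ j\in A}} \|\phi_j|_\gamma\|_{L^2(\gamma)} > 0, \qquad \liminf_{\substack{j\to\infty\\ j\in A}} \|\lambda_j^{-1/2}\dnu\psi_j|_\gamma\|_{L^2(\gamma)} > 0.
\]

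\textbf{Period vanishing yields infinitely many sign changes.} I would complement the above with a period-vanishing estimate
\[
\int_I \phi_j\, ds \to 0, \qquad \lambda_j^{-1/2}\int_I \dnu\psi_j\, ds \to 0 \qquad (j \to \infty),
\]
uniform over subarcs $I \subset \gamma$ drawn from an arbitrarily fine finite family; such period vanishing on negatively curved surfaces follows from mixing of the geodesic flow together with the semiclassical QER remainder. If $\phi_j|_\gamma$ had at most $N$ sign changes, decomposing $\gamma$ into the $N+1$ arcs of constant sign $I_1,\dots,I_{N+1}$ gives $\sum_k |\int_{I_k}\phi_j\,ds| = \int_\gamma |\phi_j|\, ds \geq c > 0$ by Cauchy--Schwarz from the previous step, whence $|\int_{I_k}\phi_j\,ds|\geq c/(N+1)$ for some $k$, contradicting period vanishing once $j$ is large. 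Letting $N\to\infty$ along $A$ yields infinitely many sign changes of $\phi_j|_\gamma$; the identical argument applied to $\lambda_j^{-1/2}\dnu\psi_j|_\gamma$ gives infinitely many sign changes of $\dnu\psi_j|_\gamma$ and hence, by the observation in Step~1, infinitely many singular points in $\Sigma_{\psi_j}\cap\gamma$.

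\textbf{Main obstacle.} The principal difficulty is the boundary QER step: since $\gamma$ is maximally symmetric, Theorem~\ref{theoN} cannot be invoked, and one must \emph{use} the symmetry by coupling the parity structure of the eigenfunction to a Cauchy-data QER on the quotient $Y$. The second delicate point is establishing the period vanishing \emph{uniformly} over a sufficiently rich family of nested subarcs; single-arc period vanishing on negatively curved surfaces is standard, but uniformity over arbitrarily fine partitions is what powers the iteration from a single sign change to infinitely many.
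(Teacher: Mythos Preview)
Your overall architecture matches the paper: recognize that $\gamma\subset\mbox{Fix}(\sigma)$ violates the asymmetry hypothesis of Theorem~\ref{theoN}, replace Theorem~\ref{sctheorem} by a Cauchy-data QER statement (the paper uses Theorem~\ref{useful}, drawn from \cite{ctz}, rather than passing to the quotient, but the content is the same), observe that parity kills one component of the Cauchy data so the surviving component inherits all the QER mass, and then rerun the sign-change argument of Theorem~\ref{theoN}.

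However, your execution of the sign-change step has a genuine gap. You assert $\int_\gamma |\phi_j|\,ds \ge c>0$ ``by Cauchy--Schwarz from the previous step''. Cauchy--Schwarz goes the wrong way: from $\|\phi_j|_\gamma\|_{L^2}\ge c$ one only gets an \emph{upper} bound on $\|\phi_j|_\gamma\|_{L^1}$. The correct inequality is $\int_\gamma|\phi_j|\,ds \ge \|\phi_j|_\gamma\|_{L^2}^2 / \|\phi_j\|_{L^\infty(M)}$, and since $\|\phi_j\|_{L^\infty(M)}\to\infty$ this lower bound tends to \emph{zero}. So you cannot beat period vanishing with a fixed constant $c$; you need a \emph{rate race}. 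The paper supplies both rates: B\'erard's sup-norm bound $\|\phi_j\|_{L^\infty}\ll \lambda_j^{1/4}/\log\lambda_j$ on surfaces without conjugate points gives $\int_{\beta_i}|\phi_j|\,ds \gg \lambda_j^{-1/4}\log\lambda_j$, while the Kuznecov sum formula (Proposition~\ref{Chebyshev}) gives $\bigl|\int_{\gamma_i} f_i\phi_j\,ds\bigr| = O\bigl(\lambda_j^{-1/4}(\log\lambda_j)^{1/2}\bigr)$ along a density-one subsequence. The extra factor of $(\log\lambda_j)^{1/2}$ is exactly what forces a sign change on each subarc. Your appeal to ``mixing together with the semiclassical QER remainder'' for the period estimate is also not right: QER controls $\int f|\phi_j|^2$, not $\int f\phi_j$, and the period bound genuinely requires the Kuznecov asymptotic of \cite{z}.
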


Note that if $Z_{\phi_j} \cap \gamma$ contains a curve, then tangential derivative of $\phi_j$ along the curve vanishes. Hence together with $\partial_{\nu} \phi_j =0$, we have $d \phi_j(x) = 0$, but this is not allowed by \cite{d}. Therefore $Z_{\phi_j} \cap \gamma$ is a finite set of points.

The statement about $  \# \; Z_{\phi_j} \cap \gamma$ follows from the first part of Theorem \ref{theoN},
and the statement about singular points follows from the second part of Theorem \ref{theoN}.  For odd eigenfuntions
under $\sigma$.  points of $\gamma$ with $\partial_{\nu} \psi_j = 0$ are singular. Thus, existence of an
 orientation-reversing isometric involution with seoarating  fixed point sets   is a mechanism which guarantees that a `large' class
of eigenfunctions have a growing number of singular points.
It would be interesting to find a more general mechanism ensuring that the number of critical points of
a sequence of eigenfunctions tends to infinity for a subsequence of eigenfunctions.
 As mentioned above, the  counter-examples of \cite{JN} show that there exist sequences
of eigenfunctions with a uniformly bounded number of critical points. For the sequences we give, the critical
points are singular and therefore are destroyed by  a small perturbation that breaks the symmetry.

\subsection{Counting nodal domains}

The nodal domains of $\phi$ are the connected components of $M\backslash Z_\phi$.
In a recent article \cite{gzs}, Ghosh-Reznikov-Sarnak have proved  a lower bound
on the number of nodal domains of the even Hecke-Maass $L^2$  eigenfunctions of the Laplacian on the finite
area hyperbolic surface $\mathbb{X}=\Gamma\backslash \mathbb{H}$ for $\Gamma  = SL(2, \Z)$. Their lower bound shows that the number
of nodal domains tends to infinity with the eigenvalue at a certain power law rate.  The proof uses  methods
of $L$-functions of arithmetic automorphic forms to get lower bounds on the number
of sign changes of the even eigenfunctions along the geodesic $\gamma$ fixed
by  the isometric involution $(x, y) \to (-x, y)$ of the surface.  It then uses geometric arguments to
relate the number of these sign changes to the number of nodal domains.  We now combine the geometric arguments of \cite{gzs} (compare Lemma \ref{lem1}) with Theorem \ref{theoS} to show that
the number of nodal domains tends to infinity for a density one subsequence of even (resp. odd) eigenfunctions of
any negatively curved surface with an orientation-reversing  isometric involution as above. Before stating the result, let us review the known results on counting numbers of nodal domains.

Let $\{\phi_j\}_{j \geq 0}$ be an orthonormal eigenbasis of $L^2(M)$ with the eigenvalues $0=\lambda_0 \leq \lambda_1 \leq \lambda_2 \leq \cdots$. According to the Weyl law, we have the following asymptotic
\[
j \sim \frac{Vol(M)}{4\pi}\lambda_j.
\]
Therefore by Courant's general nodal domain theorem \cite{ch53}, we obtain an upper bound for $N(\phi_j)$:
\[
N(\phi_j) \leq j = \frac{Vol(M)}{4\pi}\lambda_j(1+o(1)).
\]

When $M$ is the unit sphere $S^2$ and $\phi$ is a random spherical harmonics, then
\[
N(\phi) \sim c\lambda_\phi
\]
holds almost surely for some constant $c>0$ \cite{ns}.
However, for an arbitrary Riemannian surface, it is not even known whether one can always find a sequence of eigenfunctions with growing number of nodal domains. In fact, the number of nodal domains does not have to grow with the eigenvalue, i.e. when $M=S^2$ or $T^2$, there exist eigenfunctions with arbitrarily large eigenvalues with $N(\phi) \leq 3$ (\cite{astern}, \cite{lewy}).
It is conjectured (T. Hoffmann-Ostenhof \cite{H})  that for any Riemannian manifold, there exists a sequence of eigenfunctions
$\phi_{j_k}$ with $N(\phi_{j_k}) \to \infty$. At the present time, this is not even known to hold
for generic metrics.  The results of \cite{gzs} and of the present article are apparently the
first to prove this conjecture for any metrics apart from surfaces of revolution or other
metrics for which separation of variables and exact calculations are possible.

We now recall the result of \cite{gzs}.
Let $\phi$ be an even Maass-Hecke $L^2$ eigenfunction on $\mathbb{X}=SL(2,\mathbb{Z})\backslash \mathbb{H}$. In \cite{gzs}, the number of nodal domains which intersect a compact geodesic segment $\beta \subset \delta=\{iy~|~y>0\}$ (which we denote by $N^\beta(\phi)$) is studied.
\begin{theorem}[\cite{gzs}]
Assume $\beta$ is sufficiently long and assume the Lindelof Hypothesis for the Maass-Hecke $L$-functions. Then
\[
N^\beta(\phi) \gg_\epsilon \lambda_\phi^{\frac{1}{24}-\epsilon}.
\]
\end{theorem}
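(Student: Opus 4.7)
The plan follows the two-step blueprint inherent in the statement: an \emph{analytic} step that lower-bounds the number of sign changes of $\phi$ along the fixed segment $\beta \subset \delta = \{iy : y > 0\}$, and a \emph{topological} step that converts each sign change into a distinct nodal domain intersecting $\beta$.

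For the topological step, I would argue as in Lemma \ref{lem1} above. Since $\phi$ is even under the involution $\sigma : x+iy \mapsto -x+iy$, the nodal set $Z_\phi$ is $\sigma$-invariant, and at each zero of $\phi|_\beta$ where $\phi$ changes sign, $Z_\phi$ crosses $\delta$ transversally. Using the $\sigma$-symmetry, any nodal arc emanating from $\beta$ into $\{x > 0\}$ is mirrored in $\{x < 0\}$, so that a pair of consecutive sign changes along $\beta$ bounds a nodal region disjoint from its neighbours. Hence $N^\beta(\phi)$ is, up to a bounded additive constant, at least the number of sign changes of $\phi$ on $\beta$.

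For the analytic step, the starting point is the elementary Cauchy--Schwarz inequality: if $\phi|_\beta$ has $n$ sign changes, splitting $\beta$ into constant-sign subintervals $\beta_1, \ldots, \beta_{n+1}$ yields
\[
\|\phi\|_{L^1(\beta)}^2 = \Big( \sum_{i=1}^{n+1} \Big| \int_{\beta_i} \phi \Big| \Big)^2 \leq (n+1) \sum_{i=1}^{n+1} |\beta_i| \int_{\beta_i} |\phi|^2 \leq (n+1)\, |\beta|\, \|\phi\|_{L^2(\beta)}^2,
\]
so that $n+1 \geq \|\phi\|_{L^1(\beta)}^2 / (|\beta|\, \|\phi\|_{L^2(\beta)}^2)$. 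A lower bound on $n$ thus reduces to a lower bound on $\|\phi\|_{L^1(\beta)}$ and an upper bound on $\|\phi\|_{L^2(\beta)}$. The upper bound is provided by Rankin--Selberg unfolding, which expresses the second moment of $\phi$ along $\delta$ in terms of the adjoint and symmetric-square $L$-functions $L(s, \phi \otimes \phi)$ and $L(s, \mathrm{Ad}\,\phi)$; on the critical line these are $\lambda_\phi^\epsilon$ under Lindel\"of. The lower bound on $\|\phi\|_{L^1(\beta)}$ is subtler: one starts from Hecke's integral representation $\int_0^\infty \phi(iy)\, y^{s-1}\, dy = \Lambda(s,\phi)$, truncates the integral to $\beta$, and interpolates via H\"older (for instance $\|\phi\|_{L^{4/3}(\beta)} \leq \|\phi\|_{L^1(\beta)}^{1/2} \|\phi\|_{L^2(\beta)}^{1/2}$), reducing matters to a lower bound on a higher moment accessible from the Hecke integral.

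The main obstacle, and the place where the exponent $1/24$ is pinned down, is proving that a positive fraction of the $L^{4/3}$ mass of $\phi$ on $\delta$ lies in any sufficiently long segment $\beta$. This requires Lindel\"of-quality estimates on the moments of $\Lambda(s, \phi)$ and its twists, combined with a careful truncation of the Mellin transforms to absorb the contribution of $\delta \setminus \beta$ without losing more than $\lambda_\phi^\epsilon$. Once these moment bounds are in place, the chain
\[
N^\beta(\phi) \gg n \gg \frac{\|\phi\|_{L^1(\beta)}^2}{|\beta|\, \|\phi\|_{L^2(\beta)}^2} \gg_\epsilon \lambda_\phi^{1/24 - \epsilon}
\]
closes and delivers the claimed lower bound.
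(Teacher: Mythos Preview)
This theorem is not proved in the present paper: it is quoted from \cite{gzs}, so there is no ``paper's own proof'' to compare against. Nonetheless, your analytic step contains a genuine error that would prevent the argument from closing.

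The inequality you derive,
\[
n+1 \;\geq\; \frac{\|\phi\|_{L^1(\beta)}^2}{|\beta|\,\|\phi\|_{L^2(\beta)}^2},
\]
is vacuous: by Cauchy--Schwarz one always has $\|\phi\|_{L^1(\beta)}^2 \leq |\beta|\,\|\phi\|_{L^2(\beta)}^2$, so the right-hand side is at most $1$ regardless of how many sign changes $\phi$ has. The decomposition into constant-sign subintervals is not doing any work in your chain of inequalities; you have essentially re-derived Cauchy--Schwarz and then inverted it.

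The mechanism actually used in \cite{gzs} (and mirrored in the proof of Theorem~\ref{theoN} in the present paper) is different. One partitions $\beta$ into roughly $\lambda_\phi^{1/24}$ subarcs $\beta_i$ of controlled length, and on each $\beta_i$ one proves two estimates going in \emph{opposite} directions:
\[
\int_{\beta_i} |\phi|\,ds \;\geq\; \frac{\|\phi\|_{L^2(\beta_i)}^2}{\|\phi\|_{L^\infty}} \qquad\text{and}\qquad \Bigl|\int_{\beta_i} f_i\,\phi\,ds\Bigr| \;=\; o\!\left(\int_{\beta_i}|\phi|\,ds\right).
\]
The first inequality uses the Iwaniec--Sarnak sup-norm bound $\|\phi\|_{L^\infty} \ll \lambda_\phi^{5/24+\epsilon}$ together with an $L^2$-restriction lower bound (supplied, under Lindel\"of, by Watson's triple-product formula). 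The second uses the Hecke integral representation of $L(s,\phi)$ and Lindel\"of to bound the signed period. Since the signed integral is strictly smaller than the unsigned one, $\phi$ must change sign on each $\beta_i$. The exponent $1/24 = 1/4 - 5/24$ is precisely the savings in the sup-norm bound over the trivial $\lambda_\phi^{1/4}$. Your topological step is essentially correct, but the analytic engine needs to be replaced by this comparison-of-integrals argument rather than the $L^1$/$L^2$ ratio.
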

If one allows possible exceptional set of $\phi$, as an application of Quantitative Quantum Ergodicity and Lindelof Hypothesis on average, one has the following unconditional result.
\begin{theorem}[\cite{jung3}]
Let $\beta \subset \delta$ be any fixed compact geodesic segment. Then within the set of even Maass-Hecke cusp forms in $\{\phi~|~T<\sqrt{\lambda_\phi} <T+1\}$, all but $O(T^{5/6+\epsilon})$ forms satisfy
\[
N^\beta(\phi) > \lambda_\phi^{\frac{\epsilon}{4}}.
\]
\end{theorem}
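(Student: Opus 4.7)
The plan is to combine a quantitative quantum unique ergodicity (QQUE) estimate with a mean Lindelof bound for Rankin--Selberg $L$-functions to produce a lower bound on the restriction norm $\int_\beta|\phi|^2\,ds$ for all but a small exceptional family of forms, and then to convert this into a lower bound on sign changes (hence on nodal domains) of $\phi|_\beta$.

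First I would reduce to counting sign changes. Since the involution $\sigma:(x,y)\mapsto(-x,y)$ has $\delta=\{iy:y>0\}$ in its fixed-point set and even $\phi$ satisfy $\sigma\phi=\phi$, the Ghosh--Reznikov--Sarnak geometric argument (in the spirit of Lemma \ref{lem1}) yields
\[
N^\beta(\phi)\ \gg\ S^\beta(\phi),
\]
where $S^\beta(\phi)$ is the number of sign changes of $\phi|_\beta$. Thus it suffices to prove $S^\beta(\phi)>\lambda_\phi^{\epsilon/4}$ outside an exceptional set of size $O(T^{5/6+\epsilon})$ in the window $T<\sqrt{\lambda_\phi}<T+1$.

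The analytic heart is a variance bound for the $L^2$ restriction norms. Using the Watson/Ichino triple-product formula (or Sarnak's period formula) together with Maass raising/lowering, one unfolds $\int_\beta|\phi|^2\,h(y)\,ds$, for a suitable smooth cutoff $h$ localising near $\beta$, into a spectral sum of central values $L(\tfrac12,\phi\otimes\phi\otimes\psi)$ over an orthonormal basis $\{\psi\}$ of Maass forms and Eisenstein series. Combining QQUE for diagonal matrix coefficients with the second-moment average for these Rankin--Selberg central values (the Lindelof-on-average input) should give a bound of the form
\[
\sum_{T<\sqrt{\lambda_\phi}<T+1}\ \Bigl|\int_\beta|\phi|^2\,ds-c_\beta\Bigr|^2\ \ll\ T^{5/6+\epsilon}.
\]
Since the window contains $\sim T$ forms and $c_\beta>0$ is an absolute constant, Chebyshev's inequality then yields an exceptional set of size $O(T^{5/6+\epsilon})$ outside of which $\int_\beta|\phi|^2\,ds\gg_\beta 1$. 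The exponent $5/6$ is dictated by the available power saving $T^{-1/6}$ beyond the trivial $O(T^{1+\epsilon})$ variance bound.

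To pass from the $L^2$ lower bound to a sign-change count, I would combine it with the polynomial sup-norm bound $\|\phi|_\beta\|_\infty\ll\lambda_\phi^{1/4+\epsilon}$ available from restriction estimates for Hecke forms: partition $\beta$ into subintervals of length $\sim\lambda_\phi^{-1/2}$, use Bers' local expansion and the essentially band-limited nature of $\phi|_\beta$ (modes up to frequency $\sqrt{\lambda_\phi}$) to control the $L^2$ mass captured in any single sign interval, and apply pigeonhole to extract at least $\lambda_\phi^{\epsilon/4}$ sign changes. The main obstacle is the variance bound itself: one needs an effective second-moment estimate for the relevant Rankin--Selberg central values across the spectral window with genuine power savings over trivial, and a careful choice of the localising test function near $\beta$ so that the off-diagonal terms in the QUE sum are genuinely dominated. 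The remaining steps are robust to the precise numerical exponents.
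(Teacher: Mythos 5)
Your plan captures one of the two inputs to Jung's argument (the quantitative QER lower bound on $\int_\beta|\phi|^2\,ds$ via Watson--Ichino and Lindel\"of on average), but it omits the second, equally essential, input: an upper bound on the smoothed \emph{periods} $\int_\beta f\,\phi\,ds$ for test functions $f$, which in the arithmetic setting comes from Lindel\"of (on average) for the relevant $GL_2$ $L$-functions after unfolding, and in the nonarithmetic setting of the present paper comes from the Kuznecov sum formula (Theorem~\ref{K} / Proposition~\ref{Chebyshev}). Without a period bound there is no way to detect sign changes: a positive lower bound on $\int_\beta|\phi|^2$, combined only with a sup-norm bound and the claim that $\phi|_\beta$ is ``band-limited to frequency $\sqrt{\lambda}$,'' does not force any sign changes at all. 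The restriction $\phi|_\beta$ does not satisfy a second-order ODE along $\beta$, so there is no Sturm oscillation available, and a band-limited function of large $L^2$ norm (e.g.\ something close to $\cos^2(\sqrt{\lambda}s)+c$) may have constant sign; hence the proposed ``Planck-scale partition $+$ pigeonhole'' step does not yield $\lambda^{\epsilon/4}$ sign changes.

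The correct conversion mechanism, which is the one the present paper uses and which Jung adapts, is the Ghosh--Reznikov--Sarnak comparison: from the restricted QE lower bound and the sup-norm bound one gets $\int_{\beta_i}|\phi|\,ds \gg \lambda^{-1/4}\log\lambda$ on each of $R$ subarcs $\beta_i$, while the period bound gives $|\int_{\beta_i} f_i\,\phi\,ds| \ll \lambda^{-1/4}(\log\lambda)^{1/2}$ (or in the arithmetic case a power-saving bound outside a small exceptional set), and since these two quantities would coincide if $\phi$ had constant sign on $\beta_i$, the discrepancy forces a sign change on each $\beta_i$. The exponent $5/6$ and the exceptional-set count then come out of controlling the measure of forms for which \emph{either} the QQE lower bound \emph{or} the period upper bound fails, via Chebyshev applied to the corresponding second-moment/variance estimates; your proposal's variance bound is the right kind of object but only handles the first failure mode. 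In short: add the period estimate and replace the Planck-scale/pigeonhole argument with the $\int|\phi|$ versus $|\int f\phi|$ comparison, and the skeleton of the proof falls into place.
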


%The main input to count the nodal domains in \cite{gzs} and \cite{jung3} is the involution on $\mathbb{X}$ induced from $\sigma:x+iy \mapsto -x+iy$ on $\mathbb{H}$. With this symmetry, one can relate the number of nodal domains which intersect $\beta$ with the number of sign changes on $\beta$, because $\beta$ is fixed by $\sigma$ (Theorem 2.2 \cite{gzs} and Lemma \ref{lem1}). Then the number of sign changes on $\beta$ can be studied by estimating various $L^p$-norm for the restriction to $\beta$.

We generalize these results to negatively curved surface with an orienting-reversing isometric  involution $\sigma$ wtih
$\mbox{Fix}(\sigma)$ a separating set (possibly with zero density set of exceptional eigenfunctions.)
\begin{theorem}\label{theo1}
Let $(M,g)$ be a compact negatively curved $C^\infty$ surface with an orientation-reversing  isometric involution $\sigma : M \to M$ with
$\mbox{Fix}(\sigma)$ separating. Assume that $M$ has ergodic geodesic flow.  Then for any orthonormal eigenbasis $\{\phi_j\}$ of $L_{even}^2(Y)$, resp. $\{\psi_j\}$ of $L_{odd}^2(M)$, one can find a density $1$ subset $A$ of $\mathbb{N}$ such that
\[
\lim_{\substack{j \to \infty \\ j \in A}}N(\phi_j) = \infty,
\]
resp.
\[
\lim_{\substack{j \to \infty \\ j \in A}}N(\psi_j) = \infty,
\]

\end{theorem}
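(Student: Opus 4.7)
The plan is to combine Theorem~\ref{theoS} with the geometric argument adapted from \cite{gzs} encoded in Lemma~\ref{lem1}. Theorem~\ref{theoS} already supplies, along a density one subset $A \subset \NN$, an infinite number of sign-change points of $\phi_j|_\gamma$ in the even case and of $\partial_\nu \psi_j|_\gamma$ in the odd case, so the remaining task is to convert a growing number of sign changes on $\gamma$ into a growing number of nodal domains on $M$.

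I would first handle the odd case, which is the cleaner one. Since $\sigma$ fixes $\gamma$ pointwise and $\psi_j \circ \sigma = -\psi_j$, we have $\gamma \subset Z_{\psi_j}$. In Fermi coordinates $(x,y)$ around $\gamma = \{y=0\}$ with $\sigma(x,y) = (x,-y)$, oddness forces $\psi_j(x,y) = a(x)y + O(y^3)$ with $a(x) = \partial_\nu \psi_j(x,0)$, so at each transverse sign-change zero of $a$ the local nodal set is $\gamma$ together with a smooth arc meeting $\gamma$ transversely. Because $\gamma$ is separating and $\gamma \subset Z_{\psi_j}$, every connected component of $M^+ \setminus Z_{\psi_j}$ is already a full nodal domain of $\psi_j$ on $M$. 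If $\partial_\nu \psi_j$ has $2N$ sign changes on $\gamma$, these partition $\gamma$ into $2N$ arcs with $\psi_j$ of alternating sign on the $M^+$-side, so adjacent arcs lie in distinct $M^+$-components. Invoking Lemma~\ref{lem1} to bound the number of arcs of $\gamma$ lying in the closure of any one component by a constant $C = C(M,\sigma)$ depending only on the topology, the number of nodal domains of $\psi_j$ meeting $\gamma$ in $M^+$ is at least $2N/C$, and by the odd symmetry one obtains $N(\psi_j) \geq 4N/C$, which tends to $\infty$ along $A$.

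For the even case $\phi_j$ does not vanish on $\gamma$, so nodal domains may cross $\gamma$ freely. Nevertheless, at each sign-change point of $\phi_j|_\gamma$, the expansion $\phi_j(x,y) = b_0(x) + b_2(x)y^2 + \cdots$ together with $\sigma$-invariance of $Z_{\phi_j}$ produces a nodal arc crossing $\gamma$ transversely; the $2N$ alternating-sign arcs of $\gamma$ each lie in the closure of a unique connected component of $M^+ \setminus Z_{\phi_j}$, and each such component pairs with its $\sigma$-image across the open arc of $\gamma$ on its boundary to form a single $\sigma$-invariant nodal domain of $\phi_j$ in $M$. Applying Lemma~\ref{lem1} a second time to bound the number of sign-change arcs appearing in the closure of any one nodal domain by the same topological constant $C$ gives $N(\phi_j) \geq 2N/C \to \infty$ along $A$.

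The main obstacle is exactly the geometric lemma: one must rule out that a single nodal domain of $\phi_j$ (or $\psi_j$) wraps around $M$ and absorbs unboundedly many sign-change arcs of $\gamma$ in its closure. The separating hypothesis on $\mbox{Fix}(\sigma)$ is what makes this possible, since it forces $M^\pm$ to be surfaces of bounded topological complexity; an Euler characteristic argument applied to a single restricted nodal region with many boundary arcs on $\gamma$ then yields a topological contradiction once the arc count exceeds a constant depending only on the genus of $M$. Without the separating assumption the topological bookkeeping would collapse and no uniform $C$ would exist.
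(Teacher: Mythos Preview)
Your overall strategy is the same as the paper's: feed the output of Theorem~\ref{theoS} into Lemma~\ref{lem1} and conclude. Indeed the paper's entire proof of Theorem~\ref{theo1} is the single sentence ``Now Theorem~\ref{theo1} follows from Theorem~\ref{theoS} and Lemma~\ref{lem1}.'' So you are on the right track and your final paragraph correctly identifies the Euler-characteristic mechanism and the role of the separating hypothesis.

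However, you have misstated what Lemma~\ref{lem1} actually says, and the version you attribute to it is false. You invoke it as a \emph{per-domain} bound: ``the number of arcs of $\gamma$ lying in the closure of any one component is at most a constant $C=C(M,\sigma)$.'' No such bound holds once the genus is positive. A nodal arc leaving $\gamma$ at a sign-change point need not separate $M^{+}$; it can wind around a handle and return, so a single connected component of $M^{+}\setminus Z_{\psi_j}$ can genuinely contain many sign-change arcs of $\gamma$ in its closure. What Lemma~\ref{lem1} provides is a \emph{global} inequality coming from Euler's formula applied to the nodal graph (with $\gamma$ adjoined in the even case):
\[
N(\psi_j)\ \geq\ \#\bigl(\Sigma_{\psi_j}\cap\gamma\bigr)+2-2g_M,
\qquad
N(\phi_j)\ \geq\ \tfrac{1}{2}\,\#\bigl(Z_{\phi_j}\cap\gamma\bigr)+1-g_M.
\]
The point is that every singular point of $\psi_j$ on $\gamma$ (resp.\ every point of $Z_{\phi_j}\cap\gamma$) is a vertex of degree $\geq 4$ in the nodal graph, so the handshake identity gives $e-v\geq n$, and then $v-e+f-m\geq 1-2g_M$ yields the displayed inequalities directly. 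There is no need to control how the arcs distribute among individual domains; the genus correction $-2g_M$ absorbs exactly the failure of the per-domain bound you assumed.

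A secondary remark: your local Fermi-coordinate analysis is more than the argument needs. Theorem~\ref{theoS} already delivers $\#(\Sigma_{\psi_j}\cap\gamma)\to\infty$ and $\#(Z_{\phi_j}\cap\gamma)\to\infty$ along $A$, and these are precisely the quantities appearing on the right-hand sides of Lemma~\ref{lem1}, so one plugs in and is done.
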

\begin{remark}
For odd eigenfunctions, the same conclusion holds with the assumption $\mbox{Fix}(\sigma)$ separating replaced by $Fix(\sigma)\neq \emptyset$.
\end{remark}

Finally, we thank H. Parlier and M. Grohe for helpful comments and references. 

\section{Kuznecov sum formula on surfaces}

We need a prior result \cite{z} on the asymptotics of the `periods' $\int_{\gamma} f \phi_j ds$ of eigenfunctions
over closed geodesics when $f$ is a smooth function.

\begin{theorem}\label{K}  \cite{z} (Corollary 3.3)  Let $f \in C^{\infty}(\gamma)$. Then
there exists a constant $c>0$ such that,
\[
\sum_{\lambda_j < \lambda}\left|\int_{\gamma} f \phi_j ds\right|^2 = c\left|\int_{\gamma} f ds\right|^2 \sqrt{\lambda} + O_f(1).
\]
\end{theorem}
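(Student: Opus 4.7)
The strategy is the standard Kuznecov / wave-trace approach. Let $P = \sqrt{-\Delta}$ with eigenvalues $\mu_j = \sqrt{\lambda_j}$, and let $h_f \in \mathcal D'(M)$ denote the conormal distribution $f\cdot\delta_\gamma$, so that $\langle h_f,\phi_j\rangle_{L^2(M)} = \int_\gamma f\phi_j\,ds$ and the Kuznecov sum equals $\langle E_{\sqrt\lambda}h_f, h_f\rangle$, where $E_\mu = \mathbf{1}_{[0,\mu]}(P)$. Choose a real even $\rho \in \mathcal S(\mathbb R)$ with $\hat\rho \in C_c^\infty$, $\hat\rho(0)=1$, and $\operatorname{supp}\hat\rho \subset (-T,T)$ where $T$ is less than both the injectivity radius of $M$ and the length $L(\gamma)$. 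By Fourier inversion and the spectral theorem,
\[
S_f(\mu) := \sum_j \rho(\mu-\mu_j)\Big|\int_\gamma f\phi_j\,ds\Big|^2 = \frac{1}{2\pi}\int \hat\rho(t)e^{it\mu} K_f(t)\,dt,
\]
where $U(t)=e^{-itP}$ and the wave trace on $\gamma$ is
\[
K_f(t) = \iint_{\gamma\times\gamma} U(t)(\gamma(s),\gamma(s'))\,f(s)\overline{f(s')}\,ds\,ds'.
\]

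Next I would extract the leading asymptotic of $S_f(\mu)$ using the Hadamard parametrix. For $|t|<T$ the only singularity of $K_f$ is at $t=0$, since the non-trivial return times of geodesics to $\gamma$ (including $L(\gamma)$ itself) lie outside $\operatorname{supp}\hat\rho$. In Fermi coordinates $(s,u)$ along $\gamma$ the metric has the form $ds^2 + G(s,u)du^2$ with $G(s,0)=1$, so $d(\gamma(s),\gamma(s'))=|s-s'|$, and the parametrix expresses
\[
U(t)(\gamma(s),\gamma(s')) = (2\pi)^{-2}\int_{\mathbb R^2} e^{i\xi_1(s-s')-it|\xi|}\Big(1 + \sum_{k\geq 1} a_k(s,s',\xi)|\xi|^{-k}\Big)d\xi.
\]
Substituting into $S_f(\mu)$, the $t$-integral produces $\delta$-concentration on $|\xi|=\mu$; a stationary-phase expansion in $\xi_2$ then yields a complete asymptotic $S_f(\mu) = c_0 + c_1\mu^{-1} + c_2\mu^{-2} + \cdots + O(\mu^{-\infty})$, with $c_0$ the constant prescribed by the principal symbol of the parametrix, matching the coefficient in the statement.

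Finally I would pass from the smoothed expansion to the sharp counting by a sharp Fourier Tauberian theorem of Safarov--Vassiliev type. Integrating $c_0$ from $0$ to $\sqrt\lambda$ contributes $c_0 \sqrt\lambda$, which is the stated leading term; each subleading $c_k\mu^{-k}$ integrates to $O(\sqrt\lambda^{\,1-k})$, at worst $O(\log\sqrt\lambda)$ for $k=1$ and $O_f(1)$ for $k\geq 2$. The Tauberian remainder itself is $O_f(1)$, rather than the generic $o(\sqrt\lambda)$, because the asymptotic of $S_f(\mu)$ is complete modulo $O(\mu^{-\infty})$: this is a consequence of the sharp choice of $\hat\rho$ whose support excludes every non-trivial geodesic return time to $\gamma$.

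The principal obstacle is the $O_f(1)$ remainder. A generic Tauberian argument would give only $o(\sqrt\lambda)$. Sharpening it depends on (i) completeness of the parametrix expansion, which requires $K_f(t)$ to be smooth outside a neighborhood of $t = 0$ in $\operatorname{supp}\hat\rho$; and (ii) ruling out a logarithmic contribution from the first subleading coefficient $c_1$, which is handled by a parity argument in $\xi_2$ (the relevant subsymbol being odd). Non-positivity of curvature enters to guarantee that all geodesic segments starting tangent to $\gamma$ and returning to $\gamma$ have length bounded below by a definite constant, so that (i) holds uniformly once $T$ is chosen small enough.
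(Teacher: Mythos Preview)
The paper does not give its own proof of this theorem: it is quoted as Corollary~3.3 of \cite{z} and used as a black box (the sentence immediately following the statement says only the principal term, not even the remainder, is used). So there is nothing in the present paper to compare your proposal against; what you have sketched is, in outline, the argument of \cite{z} itself, and the architecture---restrict the wave kernel to $\gamma\times\gamma$, analyze the $t=0$ singularity via a short-time parametrix, then pass to the sharp sum by a Tauberian argument---is the correct one.

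Two small corrections to the sketch. First, non-positive curvature plays no role here: Theorem~\ref{K} is stated and proved in \cite{z} for arbitrary compact Riemannian manifolds, and the positive lower bound on non-trivial singular times of $K_f$ follows from compactness alone. In the present paper, curvature enters only later, through the B\'erard sup-norm estimate and through ergodicity of the geodesic flow. Second, the relevant non-zero singularities of $K_f(t)$ are the sojourn times for the conormal bundle $N^*\gamma$ (times $t$ with $G^t(N^*\gamma)\cap N^*\gamma\neq\emptyset$), i.e.\ lengths of geodesic arcs orthogonal to $\gamma$ at both endpoints; the period $L(\gamma)$ is tangential rather than normal and need not itself be such a time, so your condition $T<L(\gamma)$ is not quite the right one. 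None of this affects the validity of the strategy.
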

We only use the principal term and not the remainder estimate here.

A small modification of the proof of Theorem \ref{K} is the following: Let $\partial_{\nu} $ denote the normal
derivative along $\gamma$.

\begin{theorem}\label{KN}  Let $f \in C^{\infty}(\gamma)$. Then
there exists a constant $c>0$ such that,
\[
\sum_{\lambda_j < \lambda}\left|\lambda_j^{-1/2} \int_{\gamma} f \partial_{\nu} \phi_j ds\right|^2 = c\left|\int_{\gamma} f ds\right|^2 \sqrt{\lambda} + O_f(1).
\]
\end{theorem}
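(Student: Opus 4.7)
The plan is to mimic the proof of Theorem \ref{K} with $\phi_j$ replaced by $\lambda_j^{-1/2}\partial_\nu\phi_j$. The choice of normalization is dictated by microlocal order: the normal derivative $\partial_\nu$ is a first-order differential operator, and the prefactor $\lambda_j^{-1/2}$---which is what $(-\Delta)^{-1/2}$ returns when acting on $\phi_j$---removes exactly that order. Concretely, set $R_\gamma^\nu u := (\partial_\nu u)|_\gamma$ and $T := R_\gamma^\nu (-\Delta)^{-1/2}$, so that
\[
\lambda_j^{-1/2}\int_\gamma f\,\partial_\nu\phi_j\,ds \;=\; \langle T\phi_j,\,\bar f\rangle_{L^2(\gamma)},
\]
and $T$ is a Fourier integral operator of the same microlocal order as the trace operator $R_\gamma$ appearing in Theorem \ref{K}, differing only by the principal-symbol factor $i\xi_\nu/|\xi|$ at points of $\gamma$.

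Next I would rerun the Fourier Tauberian argument of Theorem \ref{K} applied to the spectral quantity
\[
\Phi(t) \;:=\; \sum_j \bigl|\langle T\phi_j,\bar f\rangle\bigr|^2\, e^{it\sqrt{\lambda_j}} \;=\; \langle f,\,T\,U(t)\,T^*f\rangle_{L^2(\gamma)},
\]
with $U(t)=e^{it\sqrt{-\Delta}}$. Inserting the Hadamard parametrix for $U(t)$ and composing with $T$ on the right and $T^*$ on the left, the pair of $T$-factors together contribute the extra positive weight $|\xi_\nu|^2/|\xi|^2$ to the oscillatory-integral amplitude. Hence the leading singularity of $\Phi$ at $t=0$ has exactly the same structure as in the proof of Theorem \ref{K}, with the cotangent-fiber integrand multiplied by $|\xi_\nu|^2/|\xi|^2$. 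The same Tauberian theorem then yields the stated asymptotic, with a new positive constant equal to the one in Theorem \ref{K} times the fiber integral of $|\xi_\nu|^2/|\xi|^2$ (a strictly positive quantity). The factor $\bigl|\int_\gamma f\,ds\bigr|^2$ persists because, exactly as in Theorem \ref{K}, only the zeroth tangential Fourier mode of $f$ along $\gamma$ survives at leading order---a feature of the stationary-phase analysis at the diagonal of $\gamma\times\gamma$ that is unaffected by the symbol modification.

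The main technical obstacle is a microlocal bookkeeping one: verifying that $T$ and the composition $T\,U(t)\,T^*$ are clean Fourier integral operators with the same class of singularities at $t=0$ as $R_\gamma\,U(t)\,R_\gamma^*$, so that the Tauberian machinery from Theorem \ref{K} transfers intact except for the amplitude being multiplied by $|\xi_\nu|^2/|\xi|^2$. Once this is in place, positivity and the explicit value of the new leading constant reduce to the symbol calculation just sketched, and the $O_f(1)$ remainder is controlled by the same geometric input (absence of short geodesic loops based at $\gamma$) used in Theorem \ref{K}.
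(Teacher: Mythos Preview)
Your proposal is correct and follows essentially the same approach as the paper: both observe that the normalization $\lambda_j^{-1/2}\partial_\nu$ (equivalently your $T=R_\gamma^\nu(-\Delta)^{-1/2}$) is zeroth order, rerun the wave-kernel/Tauberian argument of Theorem \ref{K} with the kernel modified by normal derivatives on each side, and note that the only effect is to multiply the principal symbol by the normal-direction factor. Your write-up is in fact more explicit than the paper's (which is a two-sentence sketch), but there is no substantive difference in method.
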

The proof is essentially the same as for Theorem \ref{K} except that one takes the normal derivative of the wave
kernel in each variable before integrating over $\gamma \times \gamma$. The normalization makes
$\lambda_j^{-1/2} \partial_{\nu}$ a zeroth order pseudo-differential operator, so that the order of the singularity
asymptotics in (2.9) of \cite{z} are the same. The only change is that the principal symbol is multiplied by
the (semi-classical) principal symbol of $\lambda_j^{-\half} \partial_{\nu}$. If we use Fermi normal coordinates
$(s, y)$ along $\gamma$ with $s$ arc-length along $\gamma$ then $\partial_{\nu} = \partial_y$ along $\gamma$
and its symbol is the dual variable $\eta_+$, i.e. the positive part of $\eta$. Here we assume that $\gamma$ is oriented
and that $\nu$ is a fixed choice of unit normal along $\gamma$, defining the `positive' side.

\begin{proposition} \label{Chebyshev}  There exists a subsequence of eigenfunctions $\phi_j$ of
natural density one so that, for all  $f \in C^{\infty}(\gamma)$,
\begin{equation} \label{EST} \left\{ \begin{array}{l}
\left|\int_{\gamma} f \phi_j ds\right| \\ \\
\lambda_j^{-\half} \left|\int_{\gamma} f \partial_{\nu} \phi_j ds\right|  \end{array} \right.  =O_f( \lambda_j^{-1/4} (\log \lambda_j )^{1/2})
\end{equation}

\end{proposition}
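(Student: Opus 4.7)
\noindent The plan is to extract the density-one subsequence by a Chebyshev--Markov argument applied to the second-moment bounds supplied by Theorems \ref{K} and \ref{KN}. Fixing $f\in C^\infty(\gamma)$, Theorem \ref{K} gives
\[\sum_{\lambda_j<\lambda}|P_j(f)|^2=O_f(\sqrt{\lambda}),\qquad P_j(f):=\int_\gamma f\phi_j\,ds,\]
while Theorem \ref{KN} gives the analogous bound with $\lambda_j^{-1/2}Q_j(f)$ replacing $P_j(f)$, where $Q_j(f):=\int_\gamma f\,\partial_\nu\phi_j\,ds$.

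I then partition the spectrum into dyadic windows $I_n=\{j:2^n\le\lambda_j<2^{n+1}\}$; subtracting consecutive cumulative sums yields $\sum_{j\in I_n}|P_j(f)|^2\le C_f\cdot 2^{n/2}$. Declare $j\in I_n$ exceptional for $f$ when $|P_j(f)|^2>C_f\cdot n\cdot 2^{-n/2}$. Markov's inequality then bounds the number of exceptional indices in $I_n$ by $2^n/n$. Since $\#I_n\sim C_M\cdot 2^n$ by Weyl's law, the proportion of exceptional indices in $\{j:\lambda_j<2^N\}$ is
\[\frac{1}{C_M 2^N}\sum_{n\le N}\frac{2^n}{n}=O\!\left(\frac{1}{N}\right)=O\!\left(\frac{1}{\log\lambda}\right)\longrightarrow 0,\]
so its complement $A(f)$ has natural density one and on $A(f)$ one has $|P_j(f)|\le C_f\lambda_j^{-1/4}(\log\lambda_j)^{1/2}$, giving the first half of (\ref{EST}). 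Repeating the same Chebyshev extraction with Theorem \ref{KN} yields a density-one set $A'(f)$ on which the second estimate of (\ref{EST}) holds; the intersection $A(f)\cap A'(f)$ remains of density one.

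To produce a density-one set independent of $f$, I fix a countable dense family $\{f_k\}\subset C^\infty(\gamma)$, run the previous step for each $f_k$ to obtain density-one sets $A_k$, and construct a common density-one subrefinement $A$ by a standard diagonal argument: choose $N_k\uparrow\infty$ so that $\#(B_k^c\cap[1,N])\le N/k$ for all $N\ge N_k$, where $B_k:=\bigcap_{i\le k}A_i$, and set
\[A=\bigcup_k\bigl(B_k\cap[N_k,N_{k+1})\bigr).\]
A direct check shows $A$ has density one and that $A\setminus A_k$ is finite for every $k$. To pass from $\{f_k\}$ to arbitrary $f\in C^\infty(\gamma)$, one approximates $f$ by some $f_k$ in $L^2(\gamma)$ and controls the error using the $L^2$ restriction bound $\|\phi_j|_\gamma\|_{L^2(\gamma)}=O(1)$ valid along the QER density-one subsequence of \cite{ctz}, which we intersect with $A$.

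The Chebyshev step itself is routine, and the mean-square inputs come directly from Theorems \ref{K} and \ref{KN}. The main technical obstacle is distilling a single density-one subsequence that works simultaneously for all smooth $f$; this requires both the diagonal construction and the QER-based restriction bound in order to approximate arbitrary $f\in C^\infty(\gamma)$ by the countable dense family.
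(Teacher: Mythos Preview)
Your Chebyshev extraction on dyadic windows and the diagonal construction producing a single density-one set $A$ that works for every member of a countable family $\{f_k\}$ are correct and essentially identical to the paper's argument. The gap is in your final step, passing from $\{f_k\}$ to an arbitrary $f\in C^\infty(\gamma)$. You estimate
\[
|P_j(f)-P_j(f_k)|\le \|f-f_k\|_{L^2(\gamma)}\,\|\phi_j|_\gamma\|_{L^2(\gamma)}=O\bigl(\|f-f_k\|_{L^2}\bigr)
\]
along the QER subsequence. But this error is a \emph{constant in $j$}, whereas the target bound $C_f\,\lambda_j^{-1/4}(\log\lambda_j)^{1/2}$ tends to $0$ as $j\to\infty$. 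Hence for any fixed $k$ with $f\neq f_k$ your inequality gives only $|P_j(f)|\le C_{f_k}\lambda_j^{-1/4}(\log\lambda_j)^{1/2}+\varepsilon$, which is \emph{not} $O_f(\lambda_j^{-1/4}(\log\lambda_j)^{1/2})$; and letting $k$ depend on $j$ destroys control of $C_{f_{k(j)}}$. The same objection applies verbatim to the Neumann term, since $\|\lambda_j^{-1/2}\partial_\nu\phi_j|_\gamma\|_{L^2(\gamma)}$ is likewise only $O(1)$ along the ergodic subsequence.

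The paper's own proof is terse at exactly this point, simply invoking the diagonalization lemma of \cite{z2,Zw}; note, though, that in the applications (\S4.2) the proposition is only ever used for finitely many bump functions $f_1,\dots,f_R$ at a time, where a finite intersection of density-one sets suffices and no approximation is needed. If you want the full ``for all $f\in C^\infty(\gamma)$'' statement, the $L^2$ restriction bound is too weak an input: one route is to go back to the Kuznecov asymptotics and check that the $O_f(1)$ remainder in Theorems \ref{K}--\ref{KN} is controlled by a fixed Sobolev norm $\|f\|_{H^s(\gamma)}$, run Chebyshev with threshold $\|f\|_{H^s}^2\,\lambda_j^{-1/2}\log\lambda_j$, and then approximate in $H^s$ rather than $L^2$ so that the resulting constants $C_{f_k}$ remain bounded along the approximating sequence.
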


\begin{proof} Denote by $N(\lambda)$ the number of eigenfunctions in $\{j~|~\lambda<\lambda_j<2\lambda \}$. For each $f$, we have by Theorem \cite{z} and   Chebyshev's inequality,

\[
\frac{1}{N(\lambda)}|\{j~|~\lambda<\lambda_j<2\lambda,~\left|\int_{\gamma_i} f \phi_j ds\right|^2 \geq \lambda_j^{-1/2}\log \lambda_j \}| = O_f(\frac{1}{\log \lambda}).
\]
It follows that the  upper density of exceptions to \eqref{EST}  tends to zero. We then choose a countable
dense set $\{f_n\}$ and apply the diagonalization argument of \cite{z2} (Lemma 3)  or \cite{Zw} Theorem 15.5 step (2)) to conclude that there exists a density
one subsequence for which \eqref{EST} holds for all $f \in C^{\infty}(\gamma)$. The same holds for the normal
derivative.

\end{proof}

\section{\label{QERasym} Quantum ergodic restriction theorem for Dirichlet or Neumann data}

QER (quantum ergodic restriction) theorems for Dirichlet data assert the quantum ergodicity of restrictions
$\phi_j |_H$
of eigenfunctions or their normal derivatives to hypersurfaces $H \subset M$.
In this section we review the QER theorem for hypersurfaces of
\cite{tz1}.  It is used in the proof of Theorem \ref{theoN}. As mentioned above, it does not apply
to the restrictions of even functions or normal derivatives of odd eigenfunctions to the fixed point set
of an isometry, and the relevant QER theorem for Cauchy data is explained in \S \ref{CDsect}.

\subsection{Quantum ergodic restriction theorems for Dirichlet data}

Roughly speaking, the QER theorem for Dirichlet data says that restrictions of eigenfunctions
to hypersurfaces $H \subset M$ for $(M, g)$ with ergodic geodesic flow  are quantum ergodic along $H$ as
long as $H$ is asymmetric for the geodesic flow. By this is meant that a  tangent vector $\xi$ to $H$ of length $\leq 1$
is the projection to $T H$ of two unit tangent vectors $\xi_{\pm}$  to $M$. The $\xi_{\pm}   = \xi + r \nu$ where
$\nu$ is the unit normal to $H$ and $|\xi|^2 + r^2 = 1$. There are two possible signs of $r$ corresponding to
the two choices of ``inward'' resp. ``outward" normal. Asymmetry of $H$ with respect to the geodesic flow
$G^t$ means that the two orbits $G^t(\xi_{\pm})$ almost never return at the same time to the same place on $H$.
A generic hypersurface is asymmetric. The fixed point set of an isometry $\sigma$ of course fails to be asymmetric
and is the model for a ``symmetric" hypersurface. We refer to \cite{tz1} (Definition 1) for the precise definition
of ``positive measure of microlocal reflection symmetry" of $H$. By asymmetry we mean that this measure is zero.

We now state the special cases relevant to Theorem \ref{theoN}.  We also write $h_j = \lambda_j^{-\half}$
and employ the calculus of semi-classical pseudo-differential operators \cite{Zw} where the
pseudo-differential
operators on $H$ are denoted by   $a^w(y, h D_y)$  or  $Op_{h_j}(a)$. The unit co-ball bundle of $H$ is denoted
by $B^* H$.

  \begin{theorem} \label{sctheorem} Let $(M, g)$ be a compact surface  with ergodic geodesic flow, and let  $H \subset
      M$ be a closed curve which is {\it asymmetric} with respect  to the geodesic flow.  Then
 there exists a  density-one subset $S$ of ${\mathbb N}$ such that
  for $a \in S^{0,0}(T^*H \times [0,h_0)),$
$$ \lim_{j \rightarrow \infty; j \in S} \langle Op_{h_j}(a)
 \phi_{h_j}|_{H},\phi_{h_j}|_{H} \rangle_{L^{2}(H)} = \omega(a), $$
 where
 $$  \omega(a) = \frac{4}{ vol(S^*M) } \int_{B^{*}H}  a_0( s, \sigma )  \,  (1 - |\sigma|^2)^{-\half}  \, ds d\sigma.$$
In particular this holds for multiplication operators $f$.
\end{theorem}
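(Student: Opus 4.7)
The plan is to follow the standard three-step architecture for quantum ergodic restriction proofs and to specialize the Toth--Zelditch framework \cite{tz1} to the surface setting. The three steps are: (i) a Kuznecov / local Weyl law on $H$ that identifies the average of $\langle \Op_{h_j}(a)\phi_{h_j}|_H,\phi_{h_j}|_H\rangle_{L^2(H)}$ as $\omega(a)$; (ii) a variance estimate that, thanks to ergodicity of the geodesic flow on $M$ together with the asymmetry hypothesis on $H$, forces the mean square deviation to tend to zero along the eigenvalue axis; and (iii) Chebyshev plus a diagonalization over a countable dense set of symbols to upgrade convergence on average to convergence along a density-one subsequence that is independent of $a$.

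For step (i), I would begin by writing the restricted matrix element as a global matrix element on $M$. The operator $\gamma_H^* \Op_{h_j}(a) \gamma_H$, where $\gamma_H$ is restriction to $H$, is not itself a pseudodifferential operator on $M$, but its smoothed wave-trace
\[
\sum_{\lambda_j < \lambda} \langle \Op_{h_j}(a)\phi_{h_j}|_H,\phi_{h_j}|_H\rangle_{L^2(H)}
\]
can be expressed, via the Hadamard parametrix for the half-wave kernel exactly as in the derivation of Theorem \ref{K}, as an oscillatory integral over $H\times H$. Isolating the diagonal contribution produces a principal-symbol identity that reads $\omega(a)$ with the weight $(1-|\sigma|^2)^{-1/2}$ arising from the Jacobian of the projection $B^*M|_H \to B^*H$ (two preimages, one per choice of sign of the normal component, added together, hence the factor $4$ after combining with the $S^*M$ normalisation). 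I would then divide by the Weyl count $N(\lambda)$ to obtain the $\omega(a)$ mean.

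The heart of the argument, and the step I expect to be the main obstacle, is the variance estimate in (ii). The classical trick is to replace $a$ by its long-time average $\langle a\rangle_T = \frac{1}{T}\int_0^T a\circ G^t\, dt$ under geodesic flow via an Egorov argument, and then to use $L^2$-bounds and ergodicity. For the restricted problem, however, squaring the matrix element produces a double phase-space integral that involves both the orbit of a covector $\xi_+ = \xi + r\nu$ and of its reflected mate $\xi_- = \xi - r\nu$; only if the set of $(t,\xi)$ for which $G^t\xi_+$ and $G^s\xi_-$ both return to $H$ at corresponding points has measure zero can the off-diagonal cross terms be discarded. This is exactly the asymmetry hypothesis (zero measure of microlocal reflection symmetry) from \cite[Def.~1]{tz1}. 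Under that hypothesis, the cross terms vanish in the $T\to\infty$ limit and the diagonal term reduces, via ergodicity of $G^t$ on $S^*M$, to $|\omega(a)|^2$; subtracting gives a variance that is $o(1)$ after first taking $\lambda\to\infty$ and then $T\to\infty$.

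Step (iii) is routine once (i) and (ii) are in hand: Chebyshev's inequality applied to the variance gives, for each fixed $a$, a density-one subsequence along which $\langle \Op_{h_j}(a)\phi_{h_j}|_H,\phi_{h_j}|_H\rangle_{L^2(H)} \to \omega(a)$, and then the diagonal extraction used in the proof of Proposition \ref{Chebyshev} (see \cite{z2} or \cite{Zw} Theorem 15.5) over a countable dense family of symbols in $S^{0,0}(T^*H\times[0,h_0))$ produces a single density-one set $S$ that works simultaneously for all $a$. The specialization to multiplication operators $f\in C^\infty(H)$ is the case $a=f(s)$ independent of $\sigma$, which is the form actually used to prove Theorem \ref{theoN}.
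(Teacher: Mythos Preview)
The paper does not prove this theorem at all: Theorem \ref{sctheorem} is stated in \S\ref{QERasym} purely as a review of the quantum ergodic restriction theorem of Toth--Zelditch \cite{tz1}, and is then invoked as a black box in the proof of Theorem \ref{theoN}. There is therefore no ``paper's own proof'' to compare your proposal against.

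That said, your three-step outline (restricted local Weyl law giving the mean $\omega(a)$; variance estimate via Egorov, ergodicity, and the asymmetry hypothesis to kill the $\xi_+/\xi_-$ cross terms; Chebyshev plus diagonalization over a countable dense family of symbols) is an accurate high-level summary of the architecture of the proof in \cite{tz1}, and your identification of the role of the asymmetry condition---controlling the off-diagonal reflected-orbit contributions in the squared matrix element---is the correct diagnosis of where that hypothesis enters. If your intent was to sketch the proof from the cited reference rather than from the present paper, the sketch is sound; just be aware that the present paper treats the result as an input, not as something it establishes.
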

%We note that Theorem \ref{maintheorem} is indeed a special case  of Theorem \ref{sctheorem} with  $a_{-j} = 0; j\geq 1$ and $a_0 \in S^{0}_{cl}(T^*H).$

There is a similar result for normalized Neumann data.
  The normalized  Neumann  data of an eigenfunction along $H$ is denoted by
 \begin{equation}
\lambda_j^{-\half} D_{\nu} \phi_j |_{H}.
\end{equation}
Here, $ D_{\nu} = \frac{1}{i} \partial_{\nu}$ is a fixed choice of unit normal derivative.

We   define the microlocal lifts of the
Neumann data as the  linear functionals on semi-classical symbols $a \in S^{0}_{sc}(H)$ given by
$$\mu_h^N(a): = \int_{B^* H} a  \, d\Phi_h^N : = \langle Op_{H}(a) h D_{\nu} \phi_h
|_{H}, h D_{\nu} \phi_h |_{H}\rangle_{L^2(H)}.  $$

 \begin{theorem} \label{ND} \label{sctheoremNeu} Let $(M, g)$ be a compact surface  with ergodic geodesic flow, and let  $H \subset
      M$ be a closed curve which is {\it asymmetric} with respect  to the geodesic flow.  Then
 there exists a  density-one subset $S$ of ${\mathbb N}$ such that
  for $a \in S^{0,0}(T^*H \times [0,h_0)),$
$$ \lim_{h_j \rightarrow 0^+; j \in S} \mu_h^N(a) \to  \omega(a), $$
 where
 $$  \omega(a) = \frac{4}{ vol(S^*M) } \int_{B^{*}H}  a_0( s, \sigma )  \,  (1 - |\sigma|^2)^{\half}  \, ds d\sigma.$$
In particular this holds for multiplication operators $f$.
\end{theorem}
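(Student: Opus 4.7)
The plan is to follow the proof of Theorem \ref{sctheorem} from \cite{tz1} and to incorporate the extra factors of $h D_\nu$ into the symbolic calculus. Working in Fermi normal coordinates $(s,y)$ along $H$ with dual variables $(\sigma,\eta)$, the unit characteristic variety of $h^2\Delta$ at $y=0$ is $|\sigma|^2+\eta^2=1$, and $h D_\nu = h D_y$ has semiclassical principal symbol $\eta$. Restricting $h D_\nu \phi_h$ to $H$ therefore adds a factor of $\eta = \pm\sqrt{1-|\sigma|^2}$ on each of the two microlocal branches of $\phi_h|_H$. In the diagonal matrix element this factor is squared, producing the extra weight $\eta^2 = 1-|\sigma|^2$; multiplying the Dirichlet weight $(1-|\sigma|^2)^{-1/2}$ by this factor gives the claimed Neumann weight $(1-|\sigma|^2)^{1/2}$, with the same normalization constant $4/\vol(S^*M)$.

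First, I would rewrite $\mu_h^N(a)$ as a matrix element $\langle B_h \phi_h,\phi_h\rangle_{L^2(M)}$, where $B_h = (h D_\nu)^*\gamma_H^*\,\Op_H(a)\,\gamma_H\,(h D_\nu)$ and $\gamma_H$ denotes the restriction operator to $H$. Following the microlocal analysis of \cite{tz1}, I would decompose $B_h$ into a ``diagonal'' semiclassical pseudodifferential operator on $M$ supported in a neighborhood of $H$, together with ``off-diagonal'' Fourier integral contributions associated to the canonical reflection $\eta\mapsto -\eta$. A short symbol computation shows that the principal symbol of the diagonal part on the characteristic variety near $H$ is the pullback of $a(s,\sigma)(1-|\sigma|^2)$ from $B^*H$, the factor $(1-|\sigma|^2)$ being the product of the two factors of $\eta$ evaluated on a single branch.

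Next, I would apply the usual quantum ergodicity machinery to the diagonal part. The variance sum estimate, combined with Egorov's theorem and the mean ergodic theorem for the geodesic flow on $S^*M$, yields a density-one subsequence $S\subset\mathbb{N}$ along which the diagonal contribution converges to $\tfrac{1}{\vol(S^*M)}\int_{S^*M}\sigma(B_h^{\mathrm{diag}})\,dL$. Pushing this integral down to $B^*H$ in Fermi coordinates, with the standard Jacobian $(1-|\sigma|^2)^{-1/2}$ from the projection $S^*M|_H \to B^*H$ and a factor of $2$ from summing over the two $\eta$-branches, yields exactly the claimed $\omega(a)$. The density-one property is then extended from a single symbol $a$ to arbitrary $a\in S^{0,0}$ by the diagonalization argument used in Proposition \ref{Chebyshev}.

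The main obstacle is the off-diagonal Fourier integral contribution. In \cite{tz1} such contributions are shown to have matrix elements that tend to zero along a density-one subsequence precisely when $H$ has zero measure of microlocal reflection symmetry, the key input being that the canonical relation $\eta\mapsto -\eta$ carries zero Liouville mass on the set of returning orbits. Inserting the factors $h D_\nu$ multiplies the off-diagonal symbol by $\eta_+\eta_- = -(1-|\sigma|^2)$, a smooth bounded factor that does not affect the underlying canonical relation, so the asymmetry hypothesis continues to annihilate these contributions in the limit $h\to 0$. Verifying this symbolic compatibility is the only genuinely new ingredient beyond the Dirichlet argument.
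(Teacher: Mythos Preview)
The paper does not give a proof of this theorem; it is stated in \S\ref{QERasym} as a review of results from \cite{tz1}, parallel to Theorem~\ref{sctheorem} for Dirichlet data. So there is no ``paper's own proof'' to compare against here.

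That said, your sketch is essentially the correct outline of how the Neumann case is obtained from the Dirichlet argument in \cite{tz1}: the key point is exactly the symbolic one you identify, namely that the two factors of $hD_\nu$ contribute a factor of $\eta^2 = 1 - |\sigma|^2$ on the characteristic set, converting the Dirichlet weight $(1-|\sigma|^2)^{-1/2}$ into the Neumann weight $(1-|\sigma|^2)^{1/2}$, while the off-diagonal FIO contribution is still killed by the asymmetry hypothesis since the bounded smooth factor $\eta_+\eta_-$ does not change the underlying canonical relation. One technical caveat: the operator $B_h$ as you write it is not literally a semiclassical pseudodifferential operator on $M$ (the restriction $\gamma_H$ is a semiclassical FIO, not a $\Psi$DO), so the decomposition into diagonal and off-diagonal parts requires the layer-potential/FIO analysis carried out in \cite{tz1} rather than a naive symbol calculus; your sketch acknowledges this but the phrase ``a short symbol computation'' understates the work involved.
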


\section{Proof of Theorem \ref{theoN}}

\subsection{A Lemma}

Define the natural density of a set $A \in \mathbb{N}$ by
\[
\lim_{X\to \infty } \frac{1}{X}|\{x\in A~|~ x<X\}|
\]
whenever the limit exists. We say ``almost all" when corresponding set $A \in \mathbb{N}$ has the natural density $1$. Note that intersection of finitely many density $1$ set is a density $1$ set.
When the limit does not exist we refer to the $\limsup$ as the upper density and the
$\liminf$ as the lower density.

\begin{lemma}\label{lem2}
Let $a_n$ be a sequence of real numbers such that for any fixed $R>0$, $a_n>R$ is satisfied for almost all $n$. Then there exists a density $1$ subsequence $\{a_n\}_{n\in A}$ such that
\[
\lim_{\substack{n\to \infty \\ n \in A} }a_n = +\infty.
\]
\end{lemma}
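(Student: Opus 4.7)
The plan is a standard diagonal exhaustion. For each positive integer $k$, set $B_k = \{n \in \mathbb{N} : a_n > k\}$; the hypothesis says that each $B_k$ has natural density $1$, and the sets are nested, $B_1 \supseteq B_2 \supseteq \cdots$. A density-$1$ subset $A \subseteq \mathbb{N}$ along which $a_n \to +\infty$ is the same as a set $A$ satisfying $A \cap [N_k, \infty) \subseteq B_k$ for some sequence $N_k \nearrow \infty$, so the task reduces to producing such $N_k$ and such an $A$.

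The first step is to choose the thresholds $N_k$ inductively. For each $k$, I would pick $N_k$ large enough that
\[
|B_k^c \cap [1, X]| < \frac{X}{k} \quad \text{for all } X \geq N_k,
\]
and simultaneously large enough that $\sum_{j=1}^{k-1} N_{j+1}/j = o(N_k)$ as $k \to \infty$. Both conditions can be arranged by further enlarging $N_k$ at each stage (e.g.\ replacing $N_k$ by $\max(N_k,\, k \sum_{j<k} N_{j+1}/j)$). This inductive bookkeeping is the only mildly technical point of the proof.

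Once the $N_k$ are in hand, I would define
\[
A = \mathbb{N} \setminus \bigcup_{k \geq 1} \bigl( [N_k, N_{k+1}) \cap B_k^c \bigr).
\]
By construction, every $n \in A$ lying in the block $[N_m, N_{m+1})$ satisfies $a_n > m$, so $a_n \to +\infty$ along $A$. For the density, for $X \in [N_k, N_{k+1})$ I would estimate
\[
|A^c \cap [1, X]| \leq \sum_{j=1}^{k-1} |B_j^c \cap [1, N_{j+1}]| + |B_k^c \cap [1, X]| \leq \sum_{j=1}^{k-1} \frac{N_{j+1}}{j} + \frac{X}{k},
\]
which is $o(X)$ by the growth condition on the $N_k$ together with the fact that $k \to \infty$ as $X \to \infty$. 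Hence $A$ has density $1$. I do not anticipate any genuine obstacle: the lemma is a pure real-analysis fact, and the inductive choice of the $N_k$ is routine.
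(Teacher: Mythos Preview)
Your argument is correct and is the same diagonal construction the paper uses: partition $\mathbb{N}$ into blocks and on the $k$th block keep only those $n$ with $a_n>k$. The paper's bookkeeping is a little cleaner: it takes thresholds $n_k$ determined by $|\{j\le n:a_j>k\}|/n>1-2^{-k}$ for all $n\ge n_k$, sets $A=\bigcup_k\bigl([n_k,n_{k+1})\cap B_k\bigr)$, and then uses the nesting $B_1\supset B_2\supset\cdots$ to get the one-line inclusion $\{j\le m:a_j>k\}\subset A\cap[1,m]$ for $m\in[n_k,n_{k+1})$, giving $|A\cap[1,m]|/m>1-2^{-k}$ directly. This sidesteps your block-by-block summation of $|A^c|$ and hence the auxiliary growth condition on the $N_k$.

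One small slip in your write-up: the suggested choice $N_k\ge k\sum_{j<k}N_{j+1}/j$ is circular, since the last term of that sum is $N_k/(k-1)$. You mean $\sum_{j\le k-2}N_{j+1}/j$; the remaining contribution $N_k/(k-1)$ to your density estimate is harmless because $1/(k-1)\to 0$. With that correction the argument goes through as you describe.
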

\begin{proof}
Let $n_k$ be the least number such that for any $n \geq n_k$,
\[
\frac{1}{n}|\{j \leq n~|~a_j>k \}| > 1- \frac{1}{2^k}.
\]
Note that $n_k$ is nondecreasing, and $\lim_{k\to \infty}n_k = +\infty$.

Define $A_k \subset \mathbb{N}$ by
\[
A_k = \{n_k \leq j < n_{k+1}~|~ a_j>k\}.
\]
Then for any $n_k\leq m <n_{k+1}$,
\[
\{j\leq m~|~a_j>k\} \subset \bigcup_{l=1}^k A_l \cap [1,m],
\]
which implies by the choice of $n_k$ that
\[
\frac{1}{m}|\bigcup_{l=1}^k A_l \cap [1,m]| >1- \frac{1}{2^k}.
\]
This proves
\[
A=\bigcup_{k=1}^\infty A_k
\]
is a density $1$ subset of $\mathbb{N}$, and by the construction we have
\[
\lim_{\substack{n\to \infty \\ n \in A} }a_n = +\infty.
\]
\end{proof}

\subsection{Completion of the proof of Theorem \ref{theoN}}

\begin{proof}
Fix $R \in \mathbb{N}$. Let $\gamma_1, \cdots, \gamma_R$ be a partition of the closed curve $H$ and let $\beta_i \subset \gamma_i$ be proper subsegments. Let $f_1, \cdots, f_R \in C_0^\infty (H)$ be given such that
\begin{align*}
supp\{f_i\} = \gamma_i\\
f_i \geq 0 \text{ on } H\\
f_i=1 \text{ on } \beta_i.
\end{align*}
 We may assume that the sequence $\{\phi_j\}$ has the quantum restriction property of
Theorem \ref{sctheorem}, which implies that
\[
\lim_{j \to \infty} ||\phi_j||_{L^2(\beta_i)} = B \cdot \mathrm{length}(\beta_j)
\]
for all $j=1,\cdots, R$ for some constant $B>0$. Namely, $B = \int_{-1}^1 (1 - \sigma^2)^{\half} d\sigma.$ Then
\begin{align*}
\int_{\beta_i} |\phi_j| ds &\geq ||\phi_j||_{L^2(\beta_i)}^2 ||\phi_j||^{-1}_{L^\infty (M)}\\
&\gg \lambda_j^{-1/4} \log \lambda_j.
\end{align*}
Here we use the well-known inequality $||\phi_j||_{L^\infty (M)} \ll \lambda_j^{1/4}/ \log \lambda_j$ which follows
from the remainder estimate in the pointwise Weyl law of \cite{Be}.

By Proposition \ref{Chebyshev},
\[
\left|\int_{\gamma_i} f_i \phi_j ds\right| =O_R( \lambda_j^{-1/4} (\log \lambda_j )^{1/2})
\]
is satisfied for any $i=1, \cdots , R$ for almost all $\phi_j$.

Therefore for all sufficiently large $j$, such $\phi_j$ has at least one sign change on each segment $\gamma_i$ proving that $\#Z_{\phi_j} \cap H \geq R$ is satisfied for every $R > 0$ by almost all $\phi_j$. Now we apply Lemma \ref{lem2} with $a_j = \#Z_{\phi_j} \cap H$ to conclude Theorem \ref{theoN}.

The proof for Neumann data is essentially the same, using Theorem \ref{ND} instead of Theorem \ref{sctheorem}.
\end{proof}

\section{ \label{sigma} Surfaces with an orientation-reversing  isometric involution }

We now specialize to  a negatively curved surface of genus $g \geq 2$  with an  orientation-reversing isometric involution with
non-empty fixed point set. To begin with, we recall some of the known results these objects.

Let $\sigma: M \to M$ be an isometric involution. We first distinguish  several  cases. First is the dichotomy  (i) $\sigma $ is orientation
reversing, or  (ii) $\sigma$ is orientation preserving. Our results only pertain to case (i).

In the case of orientation-reversing involutions, Harnack's theorem says that the fixed point set $\mbox{Fix}(\sigma)$
is a disjoint union
\begin{equation} \label{H} H = \gamma_1 \cup \cdots \cup \gamma_k \end{equation} of $0 \leq k \leq g + 1$ simple closed geodesics. We refer to Theorem 1.1 (see also Lemma 3.3)  of \cite{cp?}.
It is possible that $\mbox{Fix}(\sigma) = \emptyset$, i.e. $k = 0$, i.e. there exist orientation-reversing isometric
involutions with empty fixed point sets \cite{P}. We  assume $k \not= 0$.

There is a further dichotomy accordingly as $H$ \eqref{H} is a separating set or not. We assume that it is throughout this
article. Thus $M \backslash H = M_+ \cup M_-$ where $M_+^0 \cap M_-^0 = \emptyset$ (the interiors are disjoint),
where  $\sigma(M_+) = M_-$ and where $\partial M_+ = \partial M_- = H$. Our results at present do not apply to
the non-separating case, although it is possible that one could extend them to many non-separating cases.

In the case $k = 0$,
there does exist a closed geodesic $\gamma$ such that $\sigma(\gamma) = \gamma$.  But as in
Lemma 3.4 of \cite{cp?}, $\sigma$ is the anti-podal map of $\gamma$, i.e. $\sigma$ acts by an angle $\pi$
rotation.

The case of orientation preserving involutions $\sigma \not= id$  is discussed in \cite{ss}. By the Riemann-Hurwitz
relation, $\sigma$ has $k = 2g + 2 - 4j$ different fixed points for some $0 \leq j \leq \half(g + 1)$. When $\sigma$ has fixed points,  it has at least two fixed points.  If $A, B$
are two distinct fixed points and $u$ is a simple geodesic segment from $A$ to $B$ then $u \cup \sigma(u)$ is
a simple closed geodesic of $M$.

\subsection{Eigenfunctions on surfaces with an orientation-reversing  isometric involution}

 We consider singular points of the even, resp. odd,
eigenfunctions of involutions $\sigma$ with $\mbox{Fix}(\sigma) \not= \emptyset. $

We first consider the case of an orientation reversing involution with $\gamma \subset \mbox{Fix}(\sigma)$.
We could take the curve $C$

\begin{lem} \label{sign}  Let $(M, g)$ admit an orientation reversing isometric involution with separating $\mbox{Fix}(\sigma)$
and  $\gamma$  a  geodesic  such that $\gamma \subset \mbox{Fix}(\sigma)$.  Let $\phi_j$ be an even eigenfunction,   and let $x_0 = \gamma(s_0) $ be a zero of $\phi_j |_{\gamma}$. Then
at a regular zero $x_0$,
$\phi_j |_{\gamma}$ changes sign. That is,  if  the even eigenfunction does not change sign at the zero $x_0$ along
$\gamma$, $x_0$ must be a singular point and $Z_{\phi_j}$ locally stays on one side of $\gamma$.
\end{lem}

Indeed, since $\phi$ is even, its normal derivative vanishes everywhere  on $\gamma$. If $\phi$ does not change
sign at $x_0$, then   $\gamma$ is tangent to $Z_{\phi_j}$ at $x_0$,
i.e.   $\frac{d}{ds} \phi_j(\gamma(s)) = 0$,  so that $x_0$ is a singular point.

Next we consider odd eigenfunctions and let  $\psi_j$ be an odd eigenfunction.
The zeros of $ \partial_{\nu} \psi_j$ on $\gamma$ are also singular points of $\psi_j$.

\begin{lem} Let $(M, g)$ admit an orientation reversing isometric involution
and  $\gamma$  a  geodesic  such that $\gamma \subset \mbox{Fix}(\sigma)$.  Let  $\psi_j$ be an odd eigenfunction.

Then the   zeros of  $\partial_{\nu} \psi_j$  on $\gamma$ are intersection points
of the nodal set of $\psi_j$ in $M \backslash \gamma$ with $\gamma$, i.e. point where at least  two
nodal branches cross. \end{lem}

\begin{proof}

If $x_0$ is a singular point, then $\phi_j (x_0) = d \phi_j (x_j) = 0$, so  the zero set of $\phi_{\lambda}$ is similar to that of a spherical
harmonic of degree $k \geq 2$, which consists of $k \geq 2$  arcs meeting at equal angles at $0$.  It follows that at least two
transvese
branches of the nodal set of an odd eigenfunction meet at each singular point on $\gamma$.

\end{proof}

%Next we consider the case of an orientation-reversing involution such that $\sigma(\gamma) = \gamma$ but
%$\gamma \cap \mbox{Fix}(\sigma) = \emptyset. $ As mentioned above, $\sigma$ acts as the anti-podal map
%on $\gamma$.

\section{Proof of Theorem \ref{theoS} }

\subsection{\label{CDsect}  Quantum ergodic restriction theorems for Cauchy data}

Our
application is to the hypersurface $H$  \eqref{H} given by the fixed point set of the isometric
involution $\sigma$.  Such a hypersurface (i.e. curve)  is  precisely the kind ruled out by the
hypotheses of \cite{tz1}. However the quantum ergodic restriction theorem for Cauchy
data in \cite{ctz} does apply and shows that the even eigenfunctions are quantum ergodic
along  $H$, hence along each component $\gamma$.  The statement we use is the following:
\begin{theorem} \label{useful} Assume that $(M, g)$ has an orientation reversing  isometric involution with
separating   fixed point set $H$. Let  $\gamma$ be a component of $H$.  Let
$\phi_{h}$ be the sequence of even ergodic eigenfunctions. Then,

$$\begin{array}{l}
 \lll Op_{\gamma}(a)  \phi_{h} |_{\gamma}, \phi_{h} |_{\gamma}
\rrr_{L^2(\gamma)} \\ \\ \rightarrow_{h \to 0^+} \frac{4}{ 2 \pi \mbox{Area}(M)} \int_{B^*\gamma} a_0(s,\sigma) (1 - | \sigma |^2)^{-1/2} d s d \sigma.
\end{array}$$
In particular, this holds when $Op_{\gamma}(a)$ is multiplication by a smooth function $f$.

\end{theorem}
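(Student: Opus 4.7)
The strategy is to deduce the theorem from the Cauchy data quantum ergodic restriction theorem of \cite{ctz}, combined with a symmetry-forced vanishing of Neumann data.

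The key symmetry input is the following. Because $\sigma$ is an orientation-reversing isometric involution fixing $\gamma$ pointwise, $d\sigma$ acts as $-\mathrm{id}$ on the normal line to $\gamma$. If $\phi_h$ is $\sigma$-even, then $\partial_\nu\phi_h|_\gamma = \partial_\nu(\sigma^*\phi_h)|_\gamma = -\partial_\nu\phi_h|_\gamma$, so $\partial_\nu\phi_h \equiv 0$ on $\gamma$. Consequently the Neumann microlocal lift $\mu_h^N$ of the even sequence vanishes identically, and the full Cauchy data collapses to pure Dirichlet data.

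The Cauchy data QER theorem of \cite{ctz} requires no microlocal asymmetry hypothesis on $\gamma$: for any ergodic eigenfunction sequence it yields an explicit weighted identity between the Dirichlet and Neumann microlocal lifts and a Liouville-type measure on $B^*\gamma$. Substituting $\mu_h^N \equiv 0$ isolates the Dirichlet lift; a symbolic change of variable to pass from the Cauchy-data weighting to the standard $L^2(\gamma)$ pairing produces the stated limit
\[
\langle Op_\gamma(a)\phi_h|_\gamma, \phi_h|_\gamma \rangle_{L^2(\gamma)} \longrightarrow \frac{4}{\mathrm{vol}(S^*M)} \int_{B^*\gamma} a_0(s,\sigma)(1-|\sigma|^2)^{-1/2}\, ds\, d\sigma,
\]
which coincides with the claim since $\mathrm{vol}(S^*M) = 2\pi\cdot\mathrm{Area}(M)$ in dimension two. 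Equivalently, one may identify an even eigenfunction on $M$ with a Neumann eigenfunction on the quotient manifold with boundary $M_+ = M/\sigma$ and invoke the QER theorem for Neumann boundary data on a manifold with ergodic billiard flow (ergodicity of the billiard flow being inherited from that of the geodesic flow on $M$); either route leads to the same formula.

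Lastly, the density-one ergodic subsequence must be realized inside $L^2_{even}(M)$. Since $\sigma$ commutes with $\Delta$, the eigenbasis respects the splitting $L^2 = L^2_{even} \oplus L^2_{odd}$; ergodicity of $G^t$ on $S^*M$ for Liouville measure passes to $\sigma$-invariant observables, and the Shnirelman--Zelditch--Colin de Verdi\`ere diagonalization argument applied within the even subspace (cf.\ Proposition \ref{Chebyshev} and \cite{z2}) produces the required density-one subsequence of even eigenfunctions. The main technical obstacle is the glancing singularity of $(1-|\sigma|^2)^{-1/2}$ inherited from the change of variable; as in \cite{ctz,tz1} this is controlled by cutoff near $|\sigma|=1$ together with uniform bounds precluding tangential mass concentration on $\gamma$.
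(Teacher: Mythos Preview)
Your approach is essentially the same as the paper's: observe that even eigenfunctions have vanishing Neumann data on $\gamma$ (since $d\sigma$ reverses the normal), so the Cauchy data QER theorem of \cite{ctz} collapses to a statement about Dirichlet data alone. The only minor difference is that where you speak of a ``symbolic change of variable'' to pass from the $(1-|\sigma|^2)^{1/2}$ Cauchy-data weighting to the $(1-|\sigma|^2)^{-1/2}$ weight, the paper instead invokes directly a second formulation from \cite{ctz} (their Theorem~\ref{thm2}) in which the inverse operator $(1+h^2\Delta_\gamma+i0)^{-1}$ sits on the Neumann side and the plain Dirichlet pairing already carries the $(1-|\sigma|^2)^{-1/2}$ weight; once the Neumann term is dropped this gives the claim with no further manipulation. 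Your alternative route via Neumann eigenfunctions on the quotient $M_+$ and your remarks on extracting the even density-one subsequence are reasonable but not used in the paper.
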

We follow \cite{ctz} in using the notation $h_j = \lambda_{\phi}^{-\frac{1}{4}}$ and in dropping
the subscript.
 It also follows  that normal derivatives of odd eigenfunctions are quantum
ergodic along $\gamma$, but we do not use this result here.
We refer to \cite{tz1, ctz} for background and undefined notation for pseudo-differential operators.

We briefly review the results of \cite{ctz} in order to explain how Theorem \ref{useful}  follows from
results on Cauchy data.
  The normalized Cauchy data of an eigenfunction along $\gamma$ is denoted by
 \begin{equation} \label{CD} CD(\phi_h)  := \{(\phi_h |_{\gamma}, \;
h D_{\nu} \phi_h |_{\gamma}) \}.
\end{equation}
Here, $ D_{\nu}$ is a fixed choice of unit normal derivative. The first component of the Cauchy
data is called the Dirichlet data and the second is called the Neumann data.

The QER  result pertains to matrix elements of
semi-classical pseudo-differential operators along $\gamma$ with respect to the restricted
eigenfunctions. We only use multiplication operators in this article but state the background
results for all pseudo-differential operators.
We denote operators on $\gamma$ by  $a^w(y, h D_y)$  or  $Op_{\gamma}(a)$.
We   define the microlocal lifts of the
Neumann data as the  linear functionals on semi-classical symbols $a \in S^{0}_{sc}(\gamma)$ given by
$$\mu_h^N(a): = \int_{B^* \gamma} a  \, d\Phi_h^N : = \langle Op_{\gamma}(a) h D_{\nu} \phi_h
|_{\gamma}, h D_{\nu} \phi_h |_{\gamma}\rangle_{L^2(\gamma)}.  $$
We
also  define the {\it renormalized microlocal lifts} of the Dirichlet
data by
$$\mu_h^D(a): = \int_{B^*\gamma } a \, d\Phi_h^{RD} : = \langle Op_{\gamma}(a) (1 +
h^2 \Delta_{\gamma}) \phi_{h} |_{\gamma},  \phi_{h}|_{\gamma} \rangle_{L^2(\gamma)}.
$$
Here, $h^2 \Delta_{\gamma}$ denotes the negative
tangential Laplacian $- h^2 \frac{d^2}{ds^2} $  for the induced metric on $\gamma$, so that the symbol $1 - |\sigma|^2$ of
the operator $(1+h^2 \Delta_{\gamma})$ vanishes on the tangent directions
 $S^*\gamma$ of $\gamma$.
Finally, we  define the microlocal lift $d \Phi_h^{CD}$ of the Cauchy data  to be the sum
\begin{equation} \label{WIGCD} d \Phi_h^{CD} := d \Phi_h^N + d \Phi_h^{RD}. \end{equation}

 The
first result of \cite{ctz} states   that the Cauchy data of a sequence of quantum ergodic
eigenfunctions restricted to $\gamma$ is
 QER for semiclassical pseudodifferential operators with
symbols vanishing on the glancing set $S^*\gamma$, i.e. that
%  there is a density one subsequence of eigenvalues for
% which
$$ d \Phi_{h}^{CD}  \to \omega, $$
where $$\omega(a)  =  \frac{4}{2 \pi \mbox{Area}
 (M)} \int_{B^*\gamma} a_0(s, \sigma) (1 - | \sigma |^2)^{1/2} d s d \sigma.$$
Here, $B^* \gamma$ refers to the unit ``ball-bundle'' of $\gamma$ (which is the interval
$\sigma \in (-1,1)$ at each point $s$), $s$ denotes arc-length along $\gamma$ and $\sigma$ is the dual
symplectic coordinate.

\begin{theorem}
Assume that $\{\phi_h\}$ is a quantum ergodic sequence of eigenfunctions on $M$.  Then the sequence
$\{d \Phi_{h}^{CD} \}$ \eqref{WIGCD} of microlocal lifts of the Cauchy data of $\phi_h$ is quantum ergodic on $\gamma$ in the sense that for any
 $a \in S^0_{sc}(\gamma),$%  there exists a sub-sequence of eigenvalues
% of density one so that as $ {h_j \to 0^+} $,
$$\begin{array}{l}
\lll Op_H(a) h D_\nu \phi_h |_{\gamma} ,  h D_\nu \phi_h |_{\gamma} \rrr_{L^2(\gamma)}  + \lll Op_{\gamma}(a) (1 +
 h^2 \Delta_{\gamma}) \phi_{h} |_{\gamma}, \phi_{h} |_{\gamma}
\rrr_{L^2(\gamma)} \\ \\ \rightarrow_{h \to 0^+} \frac{4}{\mu(S^*
 M)} \int_{B^*\gamma} a_0(s, \sigma) (1 - | \sigma |^2)^{1/2} d s d\sigma
\end{array}$$
where $a_0$ is the principal symbol of $Op_{\gamma}(a)$.
\end{theorem}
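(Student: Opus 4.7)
The plan is to reduce the Cauchy-data matrix element on $\gamma$ to a bulk matrix element on $M$ via a Rellich-type commutator identity, and then invoke standard quantum ergodicity on $M$ to pass to the limit. The central trick is to choose an $h$-pseudodifferential operator $A_h$ on $M$ that is tangential with respect to $\gamma$ and has a jump discontinuity across it; then $\lll [-h^2\Delta - 1, A_h] \phi_h, \phi_h \rrr = 0$ (since $(-h^2\Delta - 1)\phi_h = 0$), so the distributional derivatives of the jump produce boundary traces on $\gamma$ that reassemble into exactly the Cauchy-data combination on the left-hand side.

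First I would introduce Fermi normal coordinates $(s, y)$ in a tubular neighborhood of $\gamma$ so that $\gamma = \{y = 0\}$ and $-h^2 \Delta = -h^2 \partial_y^2 + h^2 \Delta_\gamma + h^2 R$, with $R$ vanishing on $\gamma$. Given a tangential symbol $a(s, \sigma) \in S^0_{sc}(T^*\gamma)$, I would set $A_h = \chi(y)\, Op_h(\tilde a)$, where $\tilde a$ is the $y$-independent extension of $a$ and $\chi = \mathbf{1}_{\{y > 0\}}$. In the commutator $[-h^2 \Delta - 1, A_h]$, the derivatives of $\chi$ contribute a $\delta(y)$ term and a $\delta'(y)$ term. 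The $\delta(y)$ contribution produces the Neumann quadratic form $\lll Op_\gamma(a) h D_\nu \phi_h|_\gamma, h D_\nu \phi_h|_\gamma \rrr$, while the $\delta'(y)$ contribution, after using $h^2 \partial_y^2 \phi_h|_\gamma = -(1 + h^2 \Delta_\gamma) \phi_h|_\gamma$ from the eigenvalue equation, produces exactly the renormalized Dirichlet form $\lll Op_\gamma(a)(1 + h^2 \Delta_\gamma) \phi_h|_\gamma, \phi_h|_\gamma \rrr$. The remaining interior part of the commutator is a zeroth-order $h$-pseudodifferential operator $B_h$ with principal symbol $\chi(y)\{ |\xi|_g^2, \tilde a \}$.

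Applying QE on $M$ along a density-one subsequence gives $\lll B_h \phi_h, \phi_h \rrr \to \vol(S^* M)^{-1} \int_{S^* M} \chi(y) \{ |\xi|_g^2, \tilde a \}\, d\mu_L$. By flow-invariance of Liouville measure, this equals $-\vol(S^* M)^{-1} \int_{S^* M} \tilde a \{ |\xi|_g^2, \chi \}\, d\mu_L$, and $\{ |\xi|_g^2, \chi(y) \} = 2\eta\, \delta(y)$ converts the integral into one over $S^* M|_\gamma$ weighted by the normal momentum $\eta$. Parametrizing the fiber at $s \in \gamma$ by $(\sigma, \eta)$ with $\sigma^2 + \eta^2 = 1$ and summing over the two sheets $\eta = \pm\sqrt{1 - \sigma^2}$ produces the weight $(1 - |\sigma|^2)^{1/2}$ along with a factor of $2$ from the sheet count, yielding the claimed limit $\frac{4}{\vol(S^* M)} \int_{B^* \gamma} a_0(s, \sigma)(1 - |\sigma|^2)^{1/2}\, ds\, d\sigma$.

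The main obstacle will be the glancing set $\{|\sigma| = 1\} \subset B^* \gamma$, where $\sqrt{1 - \sigma^2}$ fails to be smooth and the tangential symbol calculus on $\gamma$ degenerates. Neither the Neumann nor the renormalized Dirichlet quadratic form is individually quantum ergodic: each picks up a singular glancing contribution that cancels only in the Cauchy-data sum. To make the Rellich identity rigorous, one must introduce an $h$-dependent tangential cutoff away from $\{|\sigma| = 1\}$, control the glancing contribution through trace estimates of Burq--Tataru type, and then let the cutoff relax at a rate compatible with the semiclassical errors in the commutator expansion. This cancellation at glancing is the real content of the theorem, and is precisely the reason one must combine the Dirichlet and Neumann quadratic forms rather than treat either one in isolation.
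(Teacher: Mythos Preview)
The paper does not prove this theorem; it is quoted from \cite{ctz} as background. Your Rellich--commutator strategy is in fact the approach of \cite{ctz}, so the overall plan is correct. However, the specific operator you choose is wrong, and as written the argument collapses to a triviality.

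With $A_h=\chi(y)\,Op_h(\tilde a)$ and $\chi=\mathbf 1_{\{y>0\}}$, the boundary contribution from $[-h^2\Delta,\chi]$ is
\[
-h^2\delta'(y)\,Op(\tilde a)\phi_h-2h^2\delta(y)\,\partial_y Op(\tilde a)\phi_h,
\]
and pairing with $\phi_h$ produces only the \emph{off-diagonal} cross terms $\langle Op_\gamma(a)\,h\partial_\nu\phi_h,\phi_h\rangle_\gamma$ and its adjoint, never $\langle Op_\gamma(a)hD_\nu\phi_h,hD_\nu\phi_h\rangle_\gamma$ or $\langle Op_\gamma(a)(1+h^2\Delta_\gamma)\phi_h,\phi_h\rangle_\gamma$. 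To see the Neumann form you need two normal derivatives on the boundary, and to invoke $h^2\partial_y^2\phi_h|_\gamma=-(1+h^2\Delta_\gamma)\phi_h|_\gamma$ you likewise need a second normal derivative; neither is available from a purely tangential $A_h$. For real $\phi_h$ and self-adjoint $Op_\gamma(a)$ your boundary terms cancel identically, and correspondingly your bulk limit $\int_{S^*M}\chi\{|\xi|_g^2,\tilde a\}\,d\mu_L=-\int_{S^*_\gamma M}\tilde a(s,\sigma)\,2\eta$ vanishes by odd parity in $\eta$. So the identity you derive is $0=0$.

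The choice that works in \cite{ctz} is $A_h=\chi(y)\,hD_y\,Op_\gamma(a)$ with a \emph{smooth} cutoff $\chi$, $\chi(0)=1$. Then in the Rellich identity on $\{y>0\}$ the two boundary traces are $A_h\phi_h|_\gamma=Op_\gamma(a)hD_\nu\phi_h|_\gamma$ and $h\partial_y(A_h\phi_h)|_\gamma$, the latter bringing in $h^2\partial_y^2\phi_h|_\gamma$ and hence, via the eigenvalue equation, the factor $(1+h^2\Delta_\gamma)\phi_h|_\gamma$; these assemble into exactly the Cauchy-data sum. The bulk commutator $\frac{i}{h}[-h^2\Delta,A_h]$ is now a genuine $h$-pseudodifferential operator on $M$ (no characteristic function), so interior quantum ergodicity applies directly and yields the stated limit. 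Finally, your diagnosis of the glancing set as the main obstacle is off for this particular statement: the weight $(1+h^2\Delta_\gamma)$ already vanishes on $S^*\gamma$, and the Rellich argument goes through without tangential cutoffs. Glancing becomes delicate only in the companion result (Theorem~\ref{thm2}) where one has to invert $(1+h^2\Delta_\gamma)$.
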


When applied to even eigenfunctions under an orientation-reversing isometric involution with separating fixed
point set, the Neumann data drops out and we get
\begin{corollary} \label{COROLLARY} Let $(M,g)$ have an  orientation-reversing isometric involution with separating fixed point set
$H$ and let $\gamma$ be one of its components.
 Then for any  sequence of  even quantum ergodic eigenfunctions of $(M, g)$,

$$\begin{array}{l}
 \lll Op_{\gamma}(a) (1 +
 h^2 \Delta_{\gamma}) \phi_{h} |_{\gamma}, \phi_{h} |_{\gamma}
\rrr_{L^2(\gamma)} \\ \\ \rightarrow_{h \to 0^+} \frac{4}{\mu(S^*
 M)} \int_{B^*\gamma} a_0(s, \sigma) (1 - | \sigma |^2)^{1/2} d s d\sigma
\end{array}$$

\end{corollary}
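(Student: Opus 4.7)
The plan is to derive the corollary as an immediate consequence of the Cauchy data QER theorem stated just above, by observing that for $\sigma$-even eigenfunctions the Neumann component of the Cauchy data along $\gamma$ vanishes identically, so the sum $d\Phi_h^{CD}$ collapses onto the renormalized Dirichlet piece $\mu_h^D$.

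First I would verify that $\partial_\nu \phi_h |_\gamma \equiv 0$ whenever $\sigma^*\phi_h = \phi_h$. Since $\sigma$ is an orientation-reversing isometry and $\gamma \subset \mbox{Fix}(\sigma)$, for each $x \in \gamma$ the differential $d\sigma_x$ restricts to the identity on $T_x\gamma$ and acts as $-1$ on the normal line $\RR\nu$. The $\sigma$-invariance of $\phi_h$ together with $\sigma(x) = x$ then gives
\[
(\partial_\nu \phi_h)(x) = d\phi_h(x)(\nu) = d(\sigma^*\phi_h)(x)(\nu) = d\phi_h(x)(d\sigma_x\cdot \nu) = -(\partial_\nu \phi_h)(x),
\]
so $\partial_\nu \phi_h \equiv 0$ on $\gamma$. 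In particular, for every $h$ and every semiclassical symbol $a \in S^0_{sc}(\gamma)$,
\[
\lll \mathrm{Op}_\gamma(a)\, h D_\nu \phi_h|_\gamma, \, h D_\nu \phi_h|_\gamma \rrr_{L^2(\gamma)} = 0.
\]

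Next I would simply apply the preceding Cauchy data QER theorem to the sequence $\{\phi_h\}$, which by hypothesis is a quantum ergodic sequence of even eigenfunctions. Subtracting the vanishing Neumann matrix element from both sides of the conclusion of that theorem leaves exactly
\[
\lll \mathrm{Op}_\gamma(a)(1+h^2\Delta_\gamma)\phi_h|_\gamma,\phi_h|_\gamma\rrr_{L^2(\gamma)} \;\longrightarrow\; \frac{4}{\mu(S^*M)}\int_{B^*\gamma} a_0(s,\sigma)(1-|\sigma|^2)^{1/2}\,ds\,d\sigma,
\]
which is the assertion of the corollary.

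The main obstacle is not really in this corollary at all. Once the Cauchy data QER theorem of \cite{ctz} is granted, the argument is a one-line parity reduction; all substantive work lies in that theorem, whose proof requires a delicate microlocal balancing of Dirichlet and Neumann contributions near the glancing set $S^*\gamma$. For the corollary itself the only nontrivial check is the pointwise identity $\partial_\nu \phi_h |_\gamma = 0$, and this is forced by the orientation-reversing nature of $\sigma$ on $\gamma$.
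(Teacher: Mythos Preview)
Your argument is correct and matches the paper's own reasoning: the corollary is obtained from the Cauchy data QER theorem by observing that for $\sigma$-even eigenfunctions the Neumann data $\partial_\nu\phi_h|_\gamma$ vanishes identically, so only the renormalized Dirichlet term survives. You have in fact supplied more detail on the vanishing of $\partial_\nu\phi_h|_\gamma$ than the paper does, but the approach is the same.
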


This is not the result we wish to apply since we would like to have a limit formula for the
integrals $\int_{\gamma} f \phi_h^2 ds$. Thus we wish to consider the
the microlocal lift $d \Phi_h^D \in
\dcal'(B^* \gamma)$ of the Dirichlet data of $\phi_h$,
$$\int_{B^* \gamma} a \, d\Phi^D_h : = \langle Op_{\gamma}(a) \phi_h|_{\gamma}, \phi_h|_{\gamma}
\rangle_{L^2(\gamma)}. $$ In order to obtain a quantum ergodicity result for the Dirichlet data we need
to introduce the renormalized microlocal lift of the Neumann data,
$$\int_{B^* \gamma} a \, d\Phi^{RN}_h : = \langle (1 + h^2\Delta_{\gamma} +
i0)^{-1} Op_{\gamma}(a) h D_{\nu}\phi_h|_{\gamma}, h D_{\nu}\phi_h |_{\gamma} \rangle_{L^2(\gamma)}. $$

% \cs we should add statement of Thm 2 with $(I+h^2 \Delta_H +i 0)^{-1}$ passed through and general symbols here  \cs

\begin{theorem} \label{thm2}

Assume that $\{\phi_h\}$ is a quantum ergodic sequence on $M$.  Then,  there exists a sub-sequence
of density one as $h \to 0^+$ such that for all  $a \in S^{0}_{sc}(\gamma)$,
\begin{align*}
&\left< (1 + h^2 \Delta_{\gamma} + i0)^{-1} Op_{\gamma}(a)  h D_\nu \phi_h |_{H} , h D_\nu
\phi_h |_{\gamma} \right>_{L^2(\gamma)} + \left< Op_{\gamma}(a)  \phi_{h} |_{\gamma}, \phi_{h} |_{\gamma}
\right>_{L^2(\gamma)} \\
&\rightarrow_{h \to 0^+} \frac{4}{ 2 \pi \mbox{Area}(M)} \int_{B^*\gamma} a_0(s,\sigma) (1 - | \sigma |^2)^{-1/2} d s d \sigma.
\end{align*}
 \end{theorem}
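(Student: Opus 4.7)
The plan is to deduce this from the Cauchy data QER theorem stated just above, by a symbol-calculus manipulation with a regularized test function. If $b$ were a smooth symbol on all of $B^*\gamma$, applying the Cauchy data QER theorem with $b$ in place of $a$ would yield
\[
\lll Op_\gamma(b)\, hD_\nu \phi_h|_\gamma, hD_\nu \phi_h|_\gamma \rrr + \lll Op_\gamma(b)(1+h^2\Delta_\gamma)\phi_h|_\gamma, \phi_h|_\gamma \rrr \to \omega(b),
\]
and the formal choice $b = a(1-|\sigma|^2)^{-1}$ would simultaneously turn $Op_\gamma(b)(1+h^2\Delta_\gamma)$ into $Op_\gamma(a)$ modulo $O(h)$, and convert the limiting weight $(1-|\sigma|^2)^{1/2}$ into $(1-|\sigma|^2)^{-1/2}$. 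The obstruction is that such a $b$ is singular on the glancing set $\{|\sigma|=1\}$, which is precisely why the operator $(1+h^2\Delta_\gamma+i0)^{-1}$ appears with an $i0$ prescription.

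First I would fix, for each $\epsilon>0$, a cutoff $\chi_\epsilon \in C_c^\infty((-1,1))$ equal to one on $|\sigma|\le 1-2\epsilon$, and set $b_\epsilon := a(1-|\sigma|^2)^{-1}\chi_\epsilon(\sigma) \in S^0_{sc}(\gamma)$. Applying the Cauchy data QER theorem to $b_\epsilon$ gives a density-one subsequence along which
\[
\lll Op_\gamma(b_\epsilon)(1+h^2\Delta_\gamma)\phi_h|_\gamma,\phi_h|_\gamma\rrr + \lll Op_\gamma(b_\epsilon)\, hD_\nu \phi_h|_\gamma, hD_\nu \phi_h|_\gamma\rrr \to \frac{4}{2\pi\,\mbox{Area}(M)}\int_{B^*\gamma} a_0\chi_\epsilon (1-|\sigma|^2)^{-1/2}\,ds\,d\sigma.
\]
Next, the pseudodifferential calculus on $\gamma$, combined with the fact that the principal symbol of $(1+h^2\Delta_\gamma)$ is $1-|\sigma|^2$, yields the operator identities
\[
Op_\gamma(b_\epsilon)(1+h^2\Delta_\gamma) = Op_\gamma(a\chi_\epsilon) + h R_\epsilon(h), \qquad Op_\gamma(b_\epsilon) = (1+h^2\Delta_\gamma+i0)^{-1}Op_\gamma(a\chi_\epsilon) + h\widetilde R_\epsilon(h),
\]
where $R_\epsilon(h)$ and $\widetilde R_\epsilon(h)$ are bounded on $L^2(\gamma)$ uniformly in $h$ for each fixed $\epsilon$. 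Substituting these into the Cauchy sum and using the standard trace bounds $\|\phi_h|_\gamma\|_{L^2(\gamma)} = O(1)$ and $\|hD_\nu\phi_h|_\gamma\|_{L^2(\gamma)} = O(1)$ rewrites the limit as the one claimed in the theorem, but with $a$ replaced by $a\chi_\epsilon$.

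The final step is to send $\epsilon \to 0$. On the right-hand side, dominated convergence applies because $(1-|\sigma|^2)^{-1/2}$ is integrable on $B^*\gamma$. On the left-hand side, the error incurred by replacing $a\chi_\epsilon$ by $a$ is a matrix element of a pseudodifferential operator with symbol supported in the glancing collar $\{1-2\epsilon \le |\sigma| \le 1\}$; its size is controlled by applying the Cauchy data QER theorem to that cutoff itself, which produces an $h$-independent bound proportional to $\int \chi_{\mathrm{collar}}(1-|\sigma|^2)^{\pm 1/2}\,ds\,d\sigma$, and both quantities vanish as $\epsilon\to 0$. A standard diagonal extraction over a countable dense family of test symbols then produces a single density-one subsequence along which the asserted limit holds for every $a\in S^0_{sc}(\gamma)$. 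The main obstacle is the analytic control of matrix elements in a shrinking neighbourhood of the glancing set $S^*\gamma$, where neither $b_\epsilon$ nor $(1+h^2\Delta_\gamma+i0)^{-1}$ is a classical $h$-pseudodifferential operator. One must verify that no anomalous mass accumulates along $S^*\gamma$ in the restricted Cauchy data as $h\to 0^+$, uniformly in $\epsilon$; this is exactly the content of the upper-bound half of the Cauchy data QER theorem, which bounds the restriction measures by the absolutely continuous Liouville-type measure carrying the integrable weight $(1-|\sigma|^2)^{1/2}$.
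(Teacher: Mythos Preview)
The paper does not supply its own proof of this theorem: it is quoted from \cite{ctz} as part of the review of Cauchy-data QER in \S\ref{CDsect}, and is then invoked to deduce Theorem~\ref{useful}. So there is no in-paper argument to compare against.

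That said, your outline is essentially the strategy of \cite{ctz}: derive the Dirichlet-weighted statement from the already-proved Cauchy-data QER by formally choosing the test symbol $b=a(1-|\sigma|^2)^{-1}$, regularize with a cutoff $\chi_\epsilon$ away from the glancing set, and then remove the cutoff using that the limiting measure puts no mass on $S^*\gamma$. Two technical comments. First, the bounds $\|\phi_h|_\gamma\|_{L^2(\gamma)}=O(1)$ and $\|hD_\nu\phi_h|_\gamma\|_{L^2(\gamma)}=O(1)$ are not ``standard'' trace estimates; the general restriction bounds only give $O(h^{-1/4})$. This does not actually damage the argument, since your remainder carries a full factor of $h$ and $h\cdot h^{-1/2}\to 0$, but the justification should be stated that way. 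Second, and more seriously, the identity $Op_\gamma(b_\epsilon)=(1+h^2\Delta_\gamma+i0)^{-1}Op_\gamma(a\chi_\epsilon)+h\widetilde R_\epsilon(h)$ cannot be obtained by applying $(1+h^2\Delta_\gamma+i0)^{-1}$ to both sides of a symbolic identity, because that operator is \emph{not} bounded on $L^2(\gamma)$ (on the circle it acts on the $n$th Fourier mode as $(1-h^2n^2+i0)^{-1}$). One must instead argue microlocally, using that $a\chi_\epsilon$ is supported where $1-|\sigma|^2\geq c_\epsilon>0$, so that a genuine parametrix for $(1+h^2\Delta_\gamma)$ exists on that region; you correctly flag this as the main obstacle at the end, but the earlier display presents it as already settled.
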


Theorem \ref{useful} follows from Theorem \ref{thm2} since the Neumann term drops out (as before) under
the hypothesis of Corollary \ref{COROLLARY}.

\subsection{Proof of Theorem \ref{theoS}}

The proof of Theorem \ref{theoS} is now the same as the proof of Theorem \ref{theoN}, using Theorem \ref{useful}
in place of Theorem \ref{sctheorem}.

\section{ Local structure of nodal sets in dimension two}

As background for the proof of Theorem \ref{theo1},  we review the
 local structure of nodal sets in dimension two.
%The results are based on
%the local rescaling of  Bers \cite{Bers} and Hartmann-Wintner \cite{HW} and have
% been  used by S. Y. Cheng \cite{Ch} to study the local structure of the nodal sets.
%In addition, the nodal set is somewhat like the 1-skeleton of a cell decomposition
%in which the nodal domains are the 2-cells and we review the relevant background from \cite{L}.

\begin{prop} \cite{Bers,HW, Ch} \label{nodal} Assume that $\phi_{\lambda}$ vanishes to order $k$ at
$x_0$. Let $\phi_{\lambda}(x) = \phi_k^{x_0} (x) + \phi^{x_0}_{k +
1} + \cdots$ denote the $C^{\infty}$ Taylor expansion of
$\phi_{\lambda}$ into homogeneous terms in normal coordinates $x$
centered at $x_0$.  Then $\phi_k^{x_0}(x)$ is a Euclidean harmonic
homogeneous polynomial of degree $k$.
\end{prop}

To prove this, one substitutes  the homogeneous expansion into the
equation  $\Delta \phi_{\lambda} = \lambda^2 \phi_{\lambda}$ and
rescales $x \to \lambda x.$
The rescaled eigenfunction is an eigenfunction of the locally
rescaled Laplacian $$ \Delta^{x_0}_{\lambda} : = \lambda^{-2}  D_{\lambda}^{x_0}
\Delta_g (D_{\lambda}^{x_0} )^{-1} = \sum_{j = 1}^n
\frac{\partial^2}{\partial u_j^2} + \cdots $$  in Riemannian
normal coordinates $u$ at $x_0$ but now with eigenvalue $1$.
Since
$\phi(x_0 + \frac{u}{\lambda})$ is, modulo lower order terms, an
eigenfunction of a standard flat Laplacian on $\R^n$, it behaves near a zero as
a sum of homogeneous Euclidean harmonic polynomials.

In dimension 2, a homogeneous harmonic polynomial of degree $N$ is the real or imaginary part of the unique
holomorphic homogeneous polynomial $z^N$ of this degree, i.e. $p_N(r, \theta) = r^N \sin N \theta$. As observed in \cite{Ch},
there exists a $C^1$ local diffeormorphism $\chi$  in a disc around a zero $x_0$ so that $\chi(x_0) = 0$
and so that   $\phi_N^{x_0}  \circ \chi  = p_N.$
It follows that the restriction of $\phi_{\lambda}$ to a curve H is $C^1$ equivalent around a zero to $p_N$ restricted
to $\chi(H)$.
The nodal set of $p_N$ around $0$ consists of $N$ rays, $\{r (\cos \theta, \sin \theta) : r > 0, p_N |_{S^1}(v) = 0\}$.
It follows that the local structure of the nodal set in a small disc around a singular point p  is $C^1$ equivalent to
$N$ equi-angular rays emanating from $p$. We refer to \cite{Ch} for further details.

\subsection{Isometric involutions and inert nodal domains}

We now apply the local results to obtain a lower bound for the number of inert nodal domains in the spirit
of \cite{gzs} Section 2.

Let us briefly summarize the argument in \cite{gzs} for genus zero surfaces. A nodal domain of an even eigenfunction
is  called {\it inert} if  it is  $\sigma$-invariant, in which case it intersects $\gamma$ in a segment. Otherwise it is called
{\it split}. The number of inert nodal domains of $\phi$ is denoted $R_{\phi}$. The number of sign changes of $\phi$
on $\gamma$ is denoted $n_{\phi}$. The main result of section 2 of \cite{gzs} in genus
zero is that $R_{\phi} \geq \half n_{\phi} + 1$.  It is also stated that $R_{\phi} \geq \half n_{\phi} + 1 - g$  in genus $g$
 (Remark 2.2).
The proof starts with the case where the nodal set is regular. In that case, the nodal line emanating from a regular  sign-change
zero on $\gamma$ must intersect $\gamma$ again at another sign-change zero. The nodal lines intersect $\gamma$
orthogonally in the regular case.  Applying $\sigma$ to the curve
produces an inert nodal domain and the inequality follows. The remainder of the proof is to show that when singular
points occur,
$R_{\phi} - \half n_{\phi} + 1$ never increases when arcs between singular points are removed. Hence
$R_{\phi} - \half n_{\phi} + 1$ is $\geq$ the same in the regular case, which is $\geq 0$.
We  note  that the  local characterization of nodal sets rules out the cusped nodal crossing  of Figure 7 of \cite{gzs}  and so we
omit this case from the discussion below.

We now prove the inequality for even (resp. odd) eigenfunctions  in the higher genus case of a Riemann surface
with an orientation-reversing isometric involution with non-empty fixed point set.

\subsection{Graph structure of the nodal set and completion of proof of Theorem \ref{theo1}}
From Proposition \ref{nodal}, we can give a graph structure (i.e. the structure of a one-dimensional CW complex)
 to $Z_{\phi_{\lambda}}$ as follows.
\begin{enumerate}
\item For each embeded circle which does not intersect $\gamma$, we add a vertex.
\item Each singular point is a vertex.
\item If $\gamma \not\subset Z_{\phi_\lambda}$, then each intersection point in $\gamma \cap Z_{\phi_\lambda}$ is a vertex.
\item Edges are the arcs of $Z_{\phi_\lambda}$ ($Z_{\phi_\lambda} \cup \gamma$, when $\phi_\lambda$ is even) which join the vertices listed above.
\end{enumerate}

This way, we obtain a graph  embeded into the surface $M$.  We recall that an embedded graph $G$ in a surface
$M$ is a finite set $V(G)$ of vertices and a finite set $E(G)$ of edges which are simple (non-self-intersecting)
curves in $M$ such that any two distinct edges have at most one endpoint and no interior points in common.
The {\it faces} $f$ of $G$ are the  connected components of $M \backslash V(G) \cup \bigcup_{e \in E(G)} e$.
The set of faces is denoted $F(G)$. An edge $e \in E(G)$ is {\it incident} to $f$ if the boundary of $f$ contains
an interior point of $e$. Every edge is incident to at least one and to at most two faces; if $e$ is incident
to $f$ then $e \subset \partial f$. The faces are not assumed to be cells and the sets $V(G), E(G), F(G)$ are
not assumed to form a CW complex. Indeed the faces of the nodal graph of odd eigenfunctions are nodal domains, which  do not have to be simply connected.  In the even case,  the faces which do not intersect $\gamma$ are nodal domains and
the ones which do are  inert nodal domains which  are cut in two by $\gamma$.

Now let $v(\phi_\lambda)$ be the number of vertices, $e(\phi_\lambda)$ be the number of edges, $f(\phi_\lambda)$ be the number of faces, and $m(\phi_\lambda)$ be the number of connected components of the graph. Then by Euler's formula (Appendix F, \cite{g}),
\begin{equation}\label{euler}
v(\phi_\lambda)-e(\phi_\lambda)+f(\phi_\lambda)-m(\phi_\lambda) \geq 1- 2 g_M
\end{equation}
where $g_M$ is the genus of the surface.

We use this inequality to give a lower bound for the number of nodal domains for even and odd eigenfunctions.
\begin{lemma}\label{lem1}
For an odd eigenfunction $\psi_j$,
\[
N(\psi_j) \geq \#\left(\Sigma_{\psi_j}\cap \gamma\right) +2 - 2g_M,
\]
and for an even eigenfunction $\phi_j$,
\[
N(\phi_j) \geq \frac{1}{2}\#\left(Z_{\phi_j} \cap \gamma\right)+1-g_M.
\]
\end{lemma}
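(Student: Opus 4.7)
The plan is to apply the Euler inequality \eqref{euler} to the graph $G$ described above. In both cases the core computation is that of $V-E$ via the degrees of the vertices, using Proposition \ref{nodal}. A singular point of order $N$ contributes degree $2N$ to the nodal part of $G$. In the even case a regular intersection $x \in Z_{\phi_j}\cap\gamma$ has degree $4$: since $\phi_j$ is even, $\partial_\nu\phi_j \equiv 0$ along $\gamma$, forcing $\nabla\phi_j(x)$ to be tangent to $\gamma$ and hence the smooth nodal branch through $x$ to be perpendicular to $\gamma$. A singular point of order $N$ on $\gamma$ in the even case has degree $2+2N$, because $\sigma$-evenness forces the homogeneous leading term to be $p_N(r,\theta)=B\,r^N\cos N\theta$, all $N$ of whose nodal lines are transverse to $\gamma$. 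Summing degrees and halving gives
\[
V-E \;=\; -\sum_{x\in\Sigma_{\psi_j}}(N_x-1) \quad\text{(odd)}, \qquad V-E \;=\; -\#(Z_{\phi_j}\cap\gamma) - \sum_{x\in\Sigma_{\phi_j}}(N_x-1) \quad\text{(even)}.
\]

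The odd case follows immediately. Since $\psi_j\circ\sigma=-\psi_j$ and $\sigma$ fixes $\gamma$ pointwise, $\gamma\subset Z_{\psi_j}$, so the faces of $G$ are exactly the nodal domains of $\psi_j$ and $f=N(\psi_j)$. Using $m\geq 1$ and $N_x\geq 2$ at every singular point on $\gamma$, \eqref{euler} yields
\[
N(\psi_j) \;\geq\; 1-2g_M + m + \sum_x(N_x-1) \;\geq\; 2-2g_M + \#(\Sigma_{\psi_j}\cap\gamma).
\]

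For the even case the same Euler inequality lower-bounds the number of faces, but these are the components of $M\setminus(\gamma\cup Z_{\phi_j})$ rather than the nodal domains themselves. To exploit $\sigma$, it is convenient to enlarge the graph by all components of $\mbox{Fix}(\sigma)$; this only strengthens the Euler bound because $\#(Z_{\phi_j}\cap\mbox{Fix}(\sigma))\geq\#(Z_{\phi_j}\cap\gamma)$. Harnack's theorem together with the separating hypothesis gives $M\setminus\mbox{Fix}(\sigma) = M_+\cup M_-$ with $\sigma(M_+)=M_-$, and the faces of the enlarged graph partition $\sigma$-symmetrically as $f=2f_+$, where $f_+$ is the number of components of $M_+\setminus Z_{\phi_j}$. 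Then \eqref{euler} produces
\[
f_+ \;\geq\; 1-g_M + \frac{1}{2}\#(Z_{\phi_j}\cap\gamma).
\]

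The main obstacle is to recover $N(\phi_j)$ from $f_+$. I will establish the identity $N(\phi_j) = f_+ + S_+$, where $S_+$ is the number of split nodal domains contained in $M_+$; in particular $N(\phi_j)\geq f_+$. The key combinatorial point is that for each inert nodal domain $D$, the intersection $D\cap M_+$ is a single face of $M_+\setminus Z_{\phi_j}$. Indeed, if two distinct $M_+$-faces $F_1,F_2\subset D$ existed, a path in $D$ from $F_1$ to $F_2$ would have to cross $\mbox{Fix}(\sigma)$; but each arc $\alpha\subset \mbox{Fix}(\sigma)\setminus Z_{\phi_j}$ borders a unique $M_+$-face on one side and its $\sigma$-image on the other, so any path that re-enters $M_+$ through $\mbox{Fix}(\sigma)$ comes back to the face it started in. Hence the inert nodal domains are in bijection with the $M_+$-faces that meet $\mbox{Fix}(\sigma)$, giving $R=f_+-S_+$, while $S=2S_+$ by the $\sigma$-pairing of split domains. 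The identity $N(\phi_j)=R+S=f_+ + S_+ \geq f_+$ then combines with the Euler bound to complete the proof.
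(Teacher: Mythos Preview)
Your proof is correct and follows essentially the same strategy as the paper: apply the Euler inequality \eqref{euler} to the graph built from the nodal set together with $\gamma$ (resp.\ $\mbox{Fix}(\sigma)$), and control $E-V$ via the vertex degrees. The odd case is identical to the paper's argument.

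In the even case there is a minor but pleasant variation. The paper keeps only the single component $\gamma$ in the graph and uses the inequality $f\le 2N_{in}+N_{sp}$ to conclude $N(\phi_j)\ge f/2$. You instead enlarge the graph to all of $\mbox{Fix}(\sigma)$, so that the separating hypothesis gives the clean symmetry $f=2f_+$, and then prove $N(\phi_j)=f_+ + S_+\ge f_+$ by showing that each inert domain meets $M_+$ in a single face. Your path-alternation argument (each crossing of $\mbox{Fix}(\sigma)$ inside $D$ toggles between a face $F$ and $\sigma(F)$) is exactly the justification for the paper's assertion that ``$\mbox{Fix}(\sigma)$ divides each inert nodal domain into two connected components,'' which the paper states without proof. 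Your more careful degree accounting at singular intersection points on $\gamma$ (degree $2+2N$, via the evenness forcing the leading harmonic term to be $r^N\cos N\theta$) is also a refinement; the paper only uses the cruder ``degree $\ge 4$,'' which already suffices for the stated bound. Both routes yield $N(\phi_j)\ge \tfrac{1}{2}\#(Z_{\phi_j}\cap\gamma)+1-g_M$.
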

\begin{proof}
\textbf{Odd case.}
For an odd eigenfunction $\psi_j$, $\gamma \subset Z_{\psi_j}$.  Therefore $f(\psi_j)=N(\psi_j)$.
Let $n(\psi_j)=\#\Sigma_{\psi_j}\cap \gamma$ be the number of singular points on $\gamma$. These points correspond to vertices having degree at least $4$ on the graph, hence
\begin{align*}
0&= \sum_{x:vertices} \mathrm{deg}(x) -2e(\psi_j) \\
&\geq 2\left(v(\psi_j)-n(\psi_j)\right)+4 n(\psi_j)-2e(\psi_j).
\end{align*}
Therefore
\[
e(\psi_j)-v(\psi_j) \geq n(\psi_j),
\]
and plugging into \eqref{euler} with $m(\psi_j)\geq 1$, we obtain
\[
N(\psi_j) \geq n(\psi_j) +2 - 2g_M.
\]

\textbf{Even case.} For an even eigenfunction $\phi_j$, let $N_{in}(\phi_j)$ be the number of nodal domain $U$ which satisfies $\sigma U=U$ (inert nodal domains). Let $N_{sp}(\phi_j)$ be the number of the rest (split nodal domains). From the assumption that $Fix(\sigma)$ is separating, inert nodal domains intersect $\mbox{Fix}(\sigma)$ on simple segments, and $\mbox{Fix}(\sigma)$ divides each nodal domain into two connected components. This implies that, because $\gamma\subset \mbox{Fix}(\sigma)$ is added when giving the graph structure, the inert nodal domain may correspond to two faces on the graph, depending on whether the nodal domain intersects $\gamma$ or not. Therefore $f(\phi_j)\leq 2N_{in}(\phi_j)+N_{sp}(\phi_j)$. 

Observe that each point in $Z_{\phi_j} \cap \gamma$ corresponds to a vertex having degree at least $4$ on the graph. Hence by the same reasoning as the odd case, we have
\[
N(\phi_j) \geq N_{in}+\frac{1}{2}N_{sp}(\phi_j) \geq \frac{f(\phi_j)}{2}\geq \frac{n(\phi_j)}{2} +1 - g_M
\]
where $n(\phi_j)=\#Z_{\phi_j} \cap \gamma$.
\end{proof}

Now Theorem \ref{theo1} follows from Theorem \ref{theoS} and Lemma \ref{lem1}.

\end{document}